\documentclass[12pt]{article}
\usepackage{amsmath}
\usepackage{amssymb}
\usepackage{bbm}
\usepackage[margin=1.25cm]{geometry}
\usepackage{hyperref}
\usepackage{tikz}
\usepackage{setspace}

\usetikzlibrary { decorations.pathmorphing, decorations.pathreplacing, decorations.shapes, }

\newtheorem{thm}{Theorem}[section]

\newtheorem{lemma}[thm]{Lemma}

\newtheorem{theorem}[thm]{Theorem}
\newtheorem{remark}[thm]{Remark}

\newtheorem{definition}[thm]{Definition}

\newtheorem{conj}[thm]{Conjecture}

\newenvironment{proof}{{\bf Proof:}}{\hfill$\square$\vskip.5cm}

\newcommand{\R}{\mathbb{R}}
\newcommand{\N}{\mathbb{N}}
\newcommand{\C}{\mathbb{C}}

\newcommand{\Z}{\mathbb{Z}}

\begin{document} 

\title{Pal's permanent conjecture: proof for block uniform matrices}
\author{Andrea Ottolini and  Shannon Starr\\
University of Alabama at Birmingham\\
1402 Tenth Avenue South\\
Birmingham, AL 35294-1241
}
\date{11 September 2026}

\onehalfspace

\maketitle

\begin{abstract}
Consider a function $\mathcal{C}(x,y)$ on $[0,1]\times[0,1]$, and let $A^{(n)} = \big(a^{(n)}_{i,j}\, :\, i,j \in [n]\big)$ be the matrix with entries $a^{(n)}_{i,j}\, =\, \exp(-\mathcal{C}(i/n,j/n))$.
For $\mathcal{C}$ that is smooth enough and symmetric, Soumik Pal conjectured the asymptotics $$\operatorname{perm}\big(A^{(n)}\big)/n!\sim \exp\big(n \Lambda[\mathcal{C}]\big)/ \sqrt{\mathcal{D}[\mathcal{C}]}$$ as $n \to \infty$
for known functionals that arise naturally in the context of entropy regularized optimal transport. The functional  $\Lambda[\mathcal{C}]$ is the known large deviation rate function, already proved rigorously by Sumit Mukherjee.
It is 
$\int_{0}^1 \int_0^{1} (\alpha(x)+\beta(y))\, dx\, dy$ where $\alpha(x)+\beta(y)$ is chosen such that 
$\rho(x,y) := \exp(-\mathcal{C}(x,y)-\alpha(x)-\beta(y))$ has uniform marginals.
The algebraic term $\mathcal{D}[\mathcal C]$ is given by Peter McCullagh's formula for doubly stochastic matrices:
$\operatorname{det}_F(I+J-T^2)$, the Fredholm determinant, where $I$ is the identity
on $L^2([0,1])$, $Jf(x) \equiv \int_{0}^1 f(z)\, dz$ and $Tf(x) = \int_0^1 \rho(x,y) f(y)\, dy$. We show that smoothness and symmetry are not essential for the conjecture to hold by proving that a similar result holds for functions $\mathcal C$ that are constant on blocks, exploiting a well-known Ross Pinsky's combinatorial decomposition of permutations in blocks.

\end{abstract}

\section{Introduction}

Suppose that $A = \big(a_{i,j}\, :\, i,j \in [n]\big)$ is a matrix
where $[n]=\{1,\dots,n\}$.
The permanent is 
$$
\operatorname{perm}(A)\, =\, \sum_{\pi \in S_n} \prod_{i=1}^n a_{i,\pi_i}\, ,
$$
which differs from the determinant in the signs of some of the summands
$$
\det(A)\, =\, \sum_{\pi \in S_n} (-1)^{\sigma(\pi)} \prod_{i=1}^{n} a_{i,\pi_i}\, ,
$$
where $\sigma(\pi)=0$ for even permutations  and $\sigma(\pi)=1$
for odd permutations.
Symmetry properties of the determinant make it easy to compute and approximate.
But for the permanent, typical calculations can be quite difficult.

For example, Valiant proved that the computational complexity of 
calculating the permanent of an arbitrary $n \times n$ zero-one matrix, a matrix
such that every $a_{i,j}$ is either $0$ or $1$,
is \#P-complete \cite{Valiant}.
In the same vein, a more recent conjecture of Aaronson and Arkhipov
is known as the ``permanent of Gaussians conjecture.''
Suppose that $A$ is a Gaussian random matrix, whose entries are IID
$\mathcal{N}(0,1)$ plus $i$ times independent $\mathcal{N}(0,1)$ random variables
(so the Ginibre ensemble). 
Then the problem of determining for each $\epsilon>0$
and $\delta>0$ an approximation of $\operatorname{perm}(A)$ that is within $\epsilon$ relative error, with probability at least $1-\delta$, is $\#$P-hard.
The reason this conjecture is important is that it provides a key step
for Aaronson and Arkhipov's quantum supremacy theoretical result based on
Boson sampling \cite{AaronsonArkhipov}.

\subsection{Permanents as generalizations of assignments}

More recently, permanents have been considered as a generalization
of the assignment problem from optimal transport theory.
One set of applications of optimal transport theory derives from economics.
See, for instance \cite{Galichon} and references, therein.
In 2012, L\'eonard wrote a seminal article \cite{Leonard}.
Suppose that every $a_{i,j}$ is positive.
The optimal assignment given $A$ would be the permutation $\pi \in S_n$
which maximizes $\prod_{i=1}^{n} a_{i,\pi_i}$ or, up to taking logarithms, the cost function $\sum_{i=1}^{n} \ln(a_{i,\pi_i})$.
A regularized version of this problem is to calculate the permanent
and randomly sample $\pi$ according to the distribution
$\prod_{i=1}^{n} a_{i,\pi_i}/\operatorname{perm}(A)$.
Therefore, interest arose in the context of this problem, especially
given certain conditions on the matrix entries $a_{i,\pi_i}$ beyond
just positivity.

In \cite{Mukherjee}, Mukherjee considered a cost function $\mathcal{C} : [0,1] \times [0,1] \to \R$ satisfying that $\mathcal{C}$ is jointly continuous on its domain and 
$$
\forall x \in [0,1]\, ,\ \text{ we have }\ \int_0^1 \mathcal{C}(x,y)\, dy \, =\, 0\
\text{ and }\
\forall y \in [0,1]\, ,\ \text{ we have }\ \int_0^1 \mathcal{C}(x,y)\, dx \, =\, 0\, .
$$
Then, for each $n \in \N$, defining the matrix 
$A^{(n)} = \big(a^{(n)}_{i,j}\, :\, i,j \in [n]\big)$ given by
$$
a^{(n)}_{i,j}\, =\, \exp\left(- \mathcal{C}(i/n,j/n)\right)\, ,
$$
one has
$$
\lim_{n \to \infty} \frac{1}{n}\, 
\ln\left(\frac{\operatorname{perm}(A^{(n)})}{n!}\right)\,
=\, \Lambda[\mathcal{C}]\, ,
$$
where 
$$
\Lambda[\mathcal{C}]\, =\, \int_0^1 \int_0^1 (\alpha(x)+\beta(y))\, dx\, dy\, ,
$$
for the functions $\alpha,\beta : [0,1] \to \R$ such that $\alpha(x)+\beta(y)$ 
is uniquely
determined by the conditions that
$$
\forall x \in [0,1]\, ,\ \text{ we have }\ \int_0^1 
\exp\left(- \mathcal{C}(x,y) - \alpha(x) - \beta(y)\right)\, dy \, =\, 1\,
$$
and
$$
\forall y \in [0,1]\, ,\ \text{ we have }\ \int_0^1 
\exp\left(- \mathcal{C}(x,y) - \alpha(x) - \beta(y)\right)\,  dx \, =\, 1\, .
$$
In \cite{Pal}, Soumik Pal noted that the determination of the functions $\alpha$ and $\beta$
 is known as the Schr\"odinger bridge problem.
The choice of $\alpha$ and $\beta$ is unique 
up to $\alpha \leftarrow \alpha+c$,
$\beta\leftarrow \beta-c$, because these two shifts cancel in $\alpha(x)+\beta(y)$.

Pal also posed an interesting
conjecture \cite{Pal}.
\begin{conj}
If $\mathcal{C} : [0,1] \times [0,1] \to \R$ is twice continuously differentiable up to the boundary,
symmetric in the two variables, zero on the diagonal and also satisfies the symmetry
$\mathcal{C}(1-x,1-y) = \mathcal{C}(x,y)$ then 
$$
\frac{1}{n!}\, \operatorname{perm}\Big(\big(\exp\left(-\mathcal{C}(i/n,j/n)\right)\, :\, i,j \in [n]\big)\Big)\, \sim\, 
\frac{\exp\left(n\Lambda[\mathcal{C}]\right)}{\sqrt{\mathcal{D}[\mathcal{C}]}}\, ,
$$
where $\mathcal{D}[\mathcal{C}]$ is the Fredholm determinant
$$
\mathcal{D}[\mathcal{C}]\, =\, \det\nolimits_F(I+J-T^2)\, ,
$$
for $I$ the identity operator, 
$J$ the rank-1 operator on $L^2([0,1])$ given by $Jf(x) \equiv \int_0^1 f(y)\, dy$
(the constant), and $T$  the trace-class operator
$$
Tf(x)\, =\, \int_0^1 \rho(x,y) f(y)\, dy\, ,
$$
for $\rho(x,y) = e^{-\mathcal{C}(x,y)-\alpha(x)-\beta(y)}$.
\end{conj}

Pal gave two separate arguments for this. The first argument relies on a previous result
by Harchaoui, Liu and himself \cite{HLP}.
In that paper there was an extra source of randomness, but they were able to obtain precise asymptotics.
When Pal considered removing the extra source of randomness, heuristically, it led to his conjecture.
One might imagine trying to prove his conjecture that way, using a Tauberian theorem.
(See for instance \cite{Abdesselam} for an interesting paper, on a different topic, using the Wiener-Ikehara
Tauberian theorem among other arguments.)

The second argument, which is more direct, is combinatorial.
In \cite{McCullagh}, Peter McCullagh had already conjectured something similar to the conjecture above,
specifically for doubly stochastic matrices.
But then McCullagh did not need to make
as many assumptions on $\mathcal{C}(x,y)$ such as regularity, given this.
McCullagh gives a similar combinatorial argument as in Pal, but without using the spectral decomposition directly.
In Section 4.3 of his article, McCullagh proves that the quantity of interest is a series
$$
h_n^{(0)}(t)\, =\,
\sum_{k=1}^{n} \mathfrak{x}_{n,k} t^{2k}\, ,
$$
where, for each $k \in \{1,\dots,n\}$, one has
$$
\mathfrak{x}_{n,k}\, =\, \frac{1}{2k}\, 
\operatorname{Tr}\left[\left(\frac{\left(\mathcal{E}^{(n)}\right)^T \mathcal{E}^{(n)}}
{n^{\downarrow 2}}\right)^k\right] + O(1/n)\, ,
$$
for $\mathcal{E}^{(n)} = n(A^{(n)}-J^{(n)})$.
He assumes $\mathcal{E}^{(n)}$ is of moderate deviation, which is a precise version
of the assumption that it is order-1.
He points out that the leading order terms give the Taylor series for the log of the determinant.
So he argues that
$$
h_n^{(0)}(1)\, =\,
\sum_{k=1}^{\infty} \mathfrak{x}_{n,k}\, =\, -\frac{1}{2}\, 
\ln\left(\det\left(I_n - \frac{\left(\mathcal{E}^{(n)}\right)^T \mathcal{E}^{(n)}}
{n^{\downarrow 2}}\right)\right) + O(n^{-1})\, .
$$
But it does seem further analysis is necessary to explicate the remainder term
$O(n^{-1})$.

There is a recent article by Raghavendra Tripathi about Pal's conjecture \cite{Tripathi}.
He cites McCullagh's paper in a proposition.
Our result is independent of that.
We consider a case where the function is piecewise constant.
So it is not even continuous, but the calculations 
simplify due to the combinatorics.

\subsection{Summary of our results}

In our note, we consider functions which are piecewise constant, on blocks.
Thus the function $\mathcal{C}$ is not continuous, let alone twice continuously differentiable up to the boundary.
Therefore, we cannot use a particular important theorem that Pal proved
for the Schr\"odinger bridge problem.
But we can adapt the Schr\"odinger bridge problem to the finite lattice for each $n$.
Our examples are the simplest possible examples.
Moreover, the calculations necessary for the piecewise constant function on blocks
follows immediately by an important combinatorial formula of Ross Pinsky.

\subsection{Organization of the paper}

The remainder of this article is structured as follows. In Section 2, we formalize our setup for block uniform matrices, state our main result (Theorem 2.1), and illustrate it with concrete examples utilizing Pinsky's combinatorial lemma. In Section 3, we present the rigorous proof of Theorem 2.1 by approximating Pinsky's formula, evaluating the resulting Gaussian integrals, and obtaining the final determinant formula via the Schur complement. We conclude Section 3 by reinterpreting our result probabilistically as a local central limit theorem.
\\ \\
The appendices explore generalizations and physical applications of this framework. Appendix A extends our analysis to complex block uniform matrices and formulates a generalized saddle-point conjecture. Appendix B investigates the permanent of random matrices, utilizing linear response theory and Itô calculus to compute the stochastic drift of the rate function under Ginibre noise. Finally, Appendix C connects our framework to quantum physics by introducing "permanent states" for the one-dimensional Heisenberg ferromagnet. This construction offers an alternative to the traditional Bethe ansatz.

\section{Main result and examples}

Consider a fixed, finite $m \in \N = \{1,2,\dots\}$.
Let $B$ be an $m\times m$ matrix $B =\big(b_{r,s}\, :\, r,s\in [m]\big)$
with all entries in $(0,\infty)$.
Consider the vectors $\boldsymbol{v}=(v_1,\dots,v_m) \in \R^m$ and $\boldsymbol{w}=(w_1,\dots,w_m) \in \R^m$
with all components in $(0,\infty)$ such that
\begin{equation}
\label{eq:BridgeV}
\forall r\in[m]\, ,\ \text{ we have }\  \sum_{s=1}^{m} b_{r,s} v_r w_s\, =\, 1\, ,
\end{equation}
and
\begin{equation}
\label{eq:BridgeW}
\forall s\in[m]\, ,\ \text{ we have }\  \sum_{r=1}^{m} b_{r,s} v_r w_s\, =\, 1\, .
\end{equation}
These are unique modulo simultaneous transformations $\boldsymbol{v} \leftarrow c \boldsymbol{v}$ and 
$\boldsymbol{w} \leftarrow c^{-1} \boldsymbol{w}$, for $c>0$, similar to what was stated before. (Here $v_r=e^{-\alpha_r}$
and $w_s=e^{-\beta_s}$.)
Given this, let $T^{(m)} = \big(t^{(m)}_{r,s}\, :\, r,s \in [m]\big)$
be the matrix where
$$
t^{(m)}_{r,s} = b_{r,s} v_r w_s\, .
$$
And let $J^{(m)} = \big(\mathfrak{j}^{(m)}_{r,s}\, :\, r,s \in [m]\big)$ be such that $\mathfrak{j}^{(m)}_{r,s} \equiv 1/m$
(for all $r$ and $s$).
Finally, let $I^{(m)}$ be the $m\times m$ identity matrix.

Now for each $n \in \N$, let us define the matrix $A^{(m,n)}$  such that
$A^{(m,n)} = \big(a^{(m,n)}_{i,j}\, :\, i,j \in [m\cdot n]\big)$ is given by the formula
$$
a^{(m,n)}_{i,j}\, =\, b_{\lceil i/n\rceil,\lceil j/n\rceil}\, .
$$
(Recall that $\lceil x\rceil = \min(\{k \in \Z\, :\, k\geq x\})$.)
Then we have the following, as our main result.
\begin{theorem}
\label{thm:main}
With the setup as above,
$$
\frac{1}{(m\cdot n)!}\, \operatorname{perm}\left(A^{(m,n)}\right)\,
\sim\, \frac{m^{-mn} \left(\prod_{r=1}^{m} v_r\right)^{-n} \left(\prod_{s=1}^{m} w_s\right)^{-n}}
{\sqrt{\det\left(I^{(m)}+J^{(m)}-
{T}^{(m)}\left({T}^{(m)}\right)^T\right)}}\, ,\
\text{ as $n \to \infty$.}
$$
\end{theorem}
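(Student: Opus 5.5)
We will expand the permanent according to how many indices of row-block $r$ are sent to column-block $s$; this is Pinsky's decomposition. Write $K=(k_{r,s})$ for a nonnegative integer matrix with all row and column sums equal to $n$. The number of permutations of $[mn]$ whose block-count matrix is $K$ equals $\prod_r n!\,\prod_s n!/\prod_{r,s}k_{r,s}!$. For such a permutation the weight $\prod_i a_{i,\pi_i}$ equals $\prod_{r,s}b_{r,s}^{k_{r,s}}$. Hence
$$
\operatorname{perm}(A^{(m,n)})\,=\,(n!)^{2m}\sum_{K}\prod_{r,s}\frac{b_{r,s}^{k_{r,s}}}{k_{r,s}!}\,.
$$
Next we replace $b_{r,s}$ by $t_{r,s}=b_{r,s}v_rw_s$. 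Since every row and column sum of $K$ is $n$, we have $\prod b_{r,s}^{k_{r,s}}=(\prod_r v_r)^{-n}(\prod_s w_s)^{-n}\prod t_{r,s}^{k_{r,s}}$. This produces the prefactor in the theorem, and it remains to find the asymptotics of $S_n:=\sum_K\prod t_{r,s}^{k_{r,s}}/k_{r,s}!$, where $T=T^{(m)}$ is doubly stochastic.

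The sum $S_n$ is a local limit problem. Let $X_{r,s}$ be independent Poisson variables with mean $nt_{r,s}$. Then $\prod t^{k}/k!=e^{-mn}n^{-mn}\,\mathbb P(X=K)$, because $\sum_{r,s} t_{r,s}=m$. Therefore
$$
S_n\,=\,e^{mn}n^{-mn}\,\mathbb P\Big(\textstyle\sum_s X_{r,s}=n\ \forall r,\ \sum_r X_{r,s}=n\ \forall s\Big)\,.
$$
The $2m$ marginal sums satisfy exactly one linear relation: the total of the row sums equals the total of the column sums. So the event is a constraint on a lattice vector of rank $2m-1$. We drop the last column constraint and write $Y\in\Z^{2m-1}$ for the remaining marginals. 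Each marginal has mean exactly $n$, again by double stochasticity. The local CLT for sums of independent Poisson variables then gives $\mathbb P(Y=\text{mean})\sim (2\pi n)^{-(2m-1)/2}\det(\Sigma)^{-1/2}$, where $n\Sigma$ is the covariance matrix of $Y$. Alternatively, one can write the probability as a $(2m-1)$-dimensional Fourier integral over the torus and do a saddle-point expansion. The Fourier route is cleaner, and it shows that no other lattice points contribute: the characteristic function has modulus $1$ only at the origin modulo the trivial direction, because all $t_{r,s}>0$.

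The main obstacle is the linear algebra: we must show that $\det\Sigma$, combined with the prefactors, gives $\det(I+J-TT^T)$. The covariance has block form $\Sigma=\begin{pmatrix} I_m & T'\\ T'^T & I_{m-1}\end{pmatrix}$, where $T'$ is $T$ with its last column deleted. This holds because $\operatorname{Var}(\sum_s X_{r,s})=n\sum_s t_{r,s}=n$ and $\operatorname{Cov}(\text{row }r,\text{col }s)=nt_{r,s}$. Taking the Schur complement, $\det\Sigma=\det(I_m-T'T'^T)$. We also have $T'T'^T=TT^T-cc^T$, where $c$ is the last column of $T$. The matrix $TT^T$ is doubly stochastic and fixes the constant vector $\mathbf 1$. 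We will check that $\det(I-TT^T+cc^T)=m^{-1}\det(I+J-TT^T)\cdot(\text{factor})$ by applying the matrix determinant lemma twice, on $\mathbf 1^\perp$ and along $\mathbf 1$. We expect this factor to equal $m\cdot(\sum_r c_r)^2/m=1$, using $\sum_r c_r=1$. If the direct identity is messy, we will instead compute the Fourier integral over all $2m$ marginals with one redundant direction integrated out against $J$, which produces $I+J-TT^T$ directly.

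Finally, we collect the constants. Stirling gives $(n!)^{2m}e^{mn}n^{-mn}(2\pi n)^{-(2m-1)/2}\big/(mn)!\sim m^{-mn}$. The powers $n^{mn}e^{-mn}$ match, the factors $(2\pi n)^{m}$ from $(n!)^{2m}$ cancel against $(2\pi n)^{-(2m-1)/2}$ and $\sqrt{2\pi mn}$, and a leftover $\sqrt m$ must cancel the $m^{-1/2}$ coming from the determinant identity. We will track this $\sqrt m$ carefully, since it is the only place where an error of a constant factor could slip in. This yields the statement of Theorem~\ref{thm:main}.
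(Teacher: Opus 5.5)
Your argument is correct, and it follows a genuinely different route from the paper's.

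\textbf{The paper's route.} It applies Stirling's formula to every $q_{r,s}!$ in Pinsky's formula and Taylor-expands around $nT^{(m)}$. It then parametrizes the $(m-1)^2$-dimensional lattice of admissible fluctuations by the matrices $F^{(\rho,\sigma)}$ and replaces the sum by a Gaussian integral with quadratic form $\mathfrak{G}$. A conditional-Gaussian (Schur complement) step converts $\det\mathfrak{G}$ into the determinant of the covariance $\mathfrak{H}$ of the $2m-1$ independent marginals. Finally, $\det\mathfrak{H}$ is evaluated through the spectrum of the $2m\times 2m$ matrix $\mathfrak{K}$ together with a cofactor argument.

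\textbf{Your route.} Poissonization turns the whole sum into the single probability $\mathbb{P}(Y=n\mathbf{1})$. You therefore land directly on $\Sigma=\mathfrak{H}$ and skip $\mathfrak{G}$ and the conditioning step entirely. The Fourier version is exactly the Cauchy-integral representation of Appendix A. For positive $B$ the minor arcs come for free: writing $e_{r,s}\in\Z^{2m-1}$ for the marginal-incidence vector of cell $(r,s)$, one has $|\mathbb{E}e^{i\langle\theta,Y\rangle}|=\exp\bigl(-n\sum_{r,s}t_{r,s}(1-\cos\langle\theta,e_{r,s}\rangle)\bigr)\le e^{-cn}$ away from the origin. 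This holds because the $e_{r,s}$ generate $\Z^{2m-1}$ and all $t_{r,s}>0$. It handles the tails more transparently than the paper's Riemann-sum step, where tail bounds and uniformity of Stirling's approximation are left implicit.

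\textbf{The determinant step.} Your planned rank-one update closes cleanly. By Perron--Frobenius, $\ker(I-TT^T)=\operatorname{span}(\mathbf{1})$, so $\operatorname{adj}(I-TT^T)=\det(I+J-TT^T)\,J^{(m)}$. Hence $\det(I-TT^T+cc^T)=c^T\operatorname{adj}(I-TT^T)\,c=\frac{1}{m}\det(I+J-TT^T)\,(\mathbf{1}^Tc)^2$, and $\mathbf{1}^Tc=1$. This is shorter than the paper's analysis of $\mathfrak{K}$.

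\textbf{What the paper's route buys.} It yields a local CLT for the block-count matrix $Q(\pi)$ itself under the Gibbs measure, with explicit precision $\mathfrak{G}$ (Section 3.3). Your argument does not produce that.

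\textbf{Small slips to fix.}
\begin{itemize}
\item It should read $\prod t^{k}/k!=e^{mn}n^{-mn}\,\mathbb{P}(X=K)$; your next display already has the correct sign.
\item The Stirling ratio is $\sim m^{-mn-1/2}$, not $m^{-mn}$. This $m^{-1/2}$ is exactly what cancels the $m^{1/2}$ coming from $(\det\Sigma)^{-1/2}$.
\item In the reduced $(2m-1)$-dimensional setting there is no trivial direction to quotient by.
\end{itemize}
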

\begin{remark}
We will denote the transpose of $T^{(m)}$ as $\left(T^{(m)}\right)^T$.
\end{remark}
As an example, suppose that we started with $B = J^{(m)}$. Then we could take $\boldsymbol{v} = \boldsymbol{w} = (1,\dots,1)$.
And $A^{(n,m)}$ would be $1/m$ times the all-ones matrix.
So we would have exactly $\operatorname{perm}(A^{(n,m)}) = (mn)!/m^{mn}$, which verifies
the theorem in this particular case because in this case ${T}^{(m)} = J^{(m)}$ as well.
And since $J^{(m)}$ is an orthogonal projection, $J^{(m)}=(J^{(m)})^T$ and $J^{(m)}J^{(m)}=J^{(m)}$.
So $\det\left(I^{(m)}+J^{(m)}-
{T}^{(m)}\left({T}^{(m)}\right)^T\right)$ is just $\det(I^{(m)})$ which is 1.

{\bf Example 2:}
As a second example, suppose $m=2$, and 
$$
B\, =\, \frac{1}{2}\, \begin{bmatrix} 1+\delta & 1-\delta \\ 1-\delta & 1+\delta \end{bmatrix}\, ,
$$
for some $\delta \in [0,1)$. (We will return to consider $\delta=1$, at the end of this section.)
Then we can take $\boldsymbol{v}=\boldsymbol{w}=(1,1)$, again. But now
$$
A^{(2,n)}\, =\, \frac{1}{2}\, \begin{bmatrix} (1+\delta) & \dots & (1+\delta) & (1-\delta) & \dots & (1-\delta) \\
\vdots & \ddots & \vdots & \vdots & \ddots & \vdots\\
(1+\delta) & \dots & (1+\delta) & (1-\delta) & \dots & (1-\delta) \\
(1-\delta) & \dots & (1-\delta) & (1+\delta) & \dots & (1+\delta) \\
\vdots & \ddots & \vdots & \vdots & \ddots & \vdots\\
(1-\delta) & \dots & (1-\delta) & (1+\delta) & \dots & (1+\delta) \end{bmatrix}
$$
So
$$
\operatorname{perm}\left(A^{(2,n)}\right)\,
=\, 2^{-2n} \sum_{\nu=0}^{n} (1+\delta)^{2\nu}(1-\delta)^{2n-2\nu} |\mathfrak{S}_{2n}(\nu)|\, ,
$$
where $\mathfrak{S}_{2n}(\nu)$ is equal to the number of bijections $\pi \in \operatorname{Bij}([2n])$
also satisfying
$$
|\{i \in [n]\, :\, \pi(i) \in [n]\}|\, =\, \nu\, .
$$
But it turns out that $|\mathfrak{S}_{2n}(\nu)|$ and similar quantities have previously been enumerated.

\subsection{A lemma of Ross Pinsky}

\begin{lemma}[Pinsky's lemma]
Suppose that $Q=\big(q_{r,s}\, :\, r,s \in [m]\big)$ is a matrix such that $q_{r,s} \in \{0,\dots,n\}$ for each $r,s \in [m]$
and 
$$
\forall r \in [m]\, ,\ \text{ we have }\ \sum_{s=1}^{m} q_{r,s}\, =\, n\, ,
$$
and
$$
\forall s \in [m]\, ,\ \text{ we have }\ \sum_{r=1}^{m} q_{r,s}\, =\, n\, .
$$
(Let us denote all such matrices as $\mathcal{M}(m,n)$.)
Then, defining $\mathfrak{S}_{mn}(Q)$ to be the set of bijections $\pi \in \operatorname{Bij}([mn])$ 
satisfying
$$
\forall (r,s) \in [m]\times [m]\, ,\ \text{ we have }\, |\{(i,j) \in [mn] \times [mn]\, :\, 
\lceil i/n \rceil =r\, ,\ \lceil j/n \rceil=s\, ,\ \text{ and }\
\pi(i)=j\}|\, =\, q_{r,s}\, ,
$$
we have
\begin{equation}
\label{eq:Pinsky}
|\mathfrak{S}_{mn}(Q)|\, =\, \frac{(n!)^{2m}}{\prod_{r=1}^{m} \prod_{s=1}^{m} q_{r,s}!}\, .
\end{equation}
\end{lemma}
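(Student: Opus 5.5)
The plan is a direct double-counting argument: build each permutation in $\mathfrak{S}_{mn}(Q)$ by a sequence of independent choices and multiply the number of options. Write $R_r = \{i \in [mn] : \lceil i/n\rceil = r\}$ for the $r$-th row block and $C_s$ for the $s$-th column block; each has exactly $n$ elements. A bijection $\pi$ lies in $\mathfrak{S}_{mn}(Q)$ exactly when $|\{i \in R_r : \pi(i) \in C_s\}| = q_{r,s}$ for all $r,s$. The row-sum condition on $Q$ says these counts partition each $R_r$. The column-sum condition says the same for each $C_s$. Both are necessary for the set to be nonempty and are used below.

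\textbf{Step 1.} For each $r$, choose an ordered partition of $R_r$ into subsets $R_{r,1},\dots,R_{r,m}$ with $|R_{r,s}| = q_{r,s}$. These are the elements of $R_r$ that will be sent into $C_s$. Since $\sum_s q_{r,s} = n$, there are $n!/\prod_s q_{r,s}!$ such choices.

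\textbf{Step 2.} For each $s$, choose an ordered partition of $C_s$ into subsets $C_{1,s},\dots,C_{m,s}$ with $|C_{r,s}| = q_{r,s}$. These are the images of the elements coming from $R_r$. Since $\sum_r q_{r,s} = n$, there are $n!/\prod_r q_{r,s}!$ such choices.

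\textbf{Step 3.} For each pair $(r,s)$, choose a bijection $R_{r,s} \to C_{r,s}$. There are $q_{r,s}!$ such bijections.

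Multiplying the three counts over all indices gives
$$
\prod_{r=1}^m \frac{n!}{\prod_s q_{r,s}!}\cdot \prod_{s=1}^m \frac{n!}{\prod_r q_{r,s}!}\cdot \prod_{r,s} q_{r,s}! \,=\, \frac{(n!)^{2m}}{\prod_{r,s} q_{r,s}!}\,,
$$
which is \eqref{eq:Pinsky}.

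The only step needing care is showing that this construction is a bijection onto $\mathfrak{S}_{mn}(Q)$. First, the pieces glue to a bijection of $[mn]$: the sets $R_{r,s}$ partition $[mn]$, the sets $C_{r,s}$ partition $[mn]$, and the bijection on each piece maps $R_{r,s}$ onto $C_{r,s}$. The resulting $\pi$ sends exactly $q_{r,s}$ elements of $R_r$ into $C_s$, so $\pi \in \mathfrak{S}_{mn}(Q)$. Conversely, any $\pi \in \mathfrak{S}_{mn}(Q)$ determines its data uniquely, by setting $R_{r,s} = R_r \cap \pi^{-1}(C_s)$ and $C_{r,s} = \pi(R_{r,s})$, and taking the restricted bijections. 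Hence the map from choices to permutations is injective and surjective, and the count is exact.
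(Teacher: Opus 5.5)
Your proof is correct and follows the same route as the paper. Both count multinomial choices of the row-block subsets $R_{r,s}$, then multinomial choices of the column-block subsets $C_{r,s}$, then $q_{r,s}!$ matchings within each block; your explicit inverse map $R_{r,s}=R_r\cap\pi^{-1}(C_s)$, $C_{r,s}=\pi(R_{r,s})$ spells out the bijectivity of the construction, which the paper leaves implicit.
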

\begin{proof}
For each $(r,s) \in [m]\times[m]$ let $I_{r,s}$ be a $q_{r,s}$-sized subset of $\{i \in [mn]\, :\, \lceil i/n\rceil=r\}$
and let $J_{r,s}$ be a $q_{r,s}$-sized subset of $\{j \in [mn]\, :\, \lceil j/n\rceil=s\}$.
If we demand that $I_{r,1},I_{r,2},\dots,I_{r,m}$ are all disjoint, as we should, then the total number of choices
for that whole row of sets is the multinomial coefficient $n!/(\prod_{s=1}^m q_{r,s}!)$.
Similarly, demanding that $J_{1,s},J_{2,s},\dots,J_{m,s}$ are all disjoint results in the number of choices of these sets
is $n!/(\prod_{r=1}^{m} q_{r,s}!)$.
Of course, we must take the product over $r \in [m]$ and over $s \in [m]$.
But then in each block $\{(i,j)\, :\, \lceil i/n \rceil = r\, ,\ \lceil j/n \rceil=s\}$ we also may match the $q_{r,s}$
points of $I_{r,s}$ with the $q_{r,s}$ points of $J_{r,s}$. This matching corresponds to a bijection in 
$\operatorname{Bij}([q_{r,s}])$. So the number of choices is then also multiplied by $q_{r,s}!$ for each $(r,s) \in [m]\times [m]$.
\end{proof}

This simple result is actually used by Pinsky to great effect in \cite{Pinsky1}.
He shows that for a uniform random permutation $\pi \in S_n$, 
the number $\mathcal{N}(n,k)$ of increasing subsequences of cardinality $k$
has both an explicit formula for the first moment (as known to Hammersley for example)
and an explicit formula for the second moment that Pinsky found, involving one summation.
Then using the Paley-Zygmund second moment method, he found a sufficient condition on the sequence $k=k_n$ for the 
number $\mathcal{N}(n,k_n)$ to satisfy a law of large numbers.

In another beautiful paper \cite{Pinsky2}, he finds a necessary condition for $\mathcal{N}(n,k_n)$
to satisfy a law of large numbers.
Interestingly, he poses a problem which remains open to this day: what is the precise condition on $k_n$
as $n \to \infty$, which is both necessary and sufficient for a law of large numbers to follow for $\mathcal{N}(n,k_n)$?
His two conditions do not match precisely.

(With Samen Hossain, one of us also has an article building on \cite{Pinsky1}.
In theoretical physics, spin glass theorists interested in calculating a physical quantity
like the ``quenched free energy'' would ask for the precise asymptotics of all the integer moments.
This would be a first step for them towards making a replica-symmetry-breaking ansatz.
Motivated by this, we wrote an article considering the precise asymptotics for the 2nd moment in the case of $k_n=cn^{1/2}$ \cite{HossainStarr}.)

In \cite{Mukherjee}, Sumit Mukherjee also rederived this result, and used it to great effect in deriving the large deviation
rate function for the permanent problem.

\subsection{Completion of Example 2}
We will complete the proof of Theorem \ref{thm:main} in the next section.
For now, let us return to our second example.
With the notation as before, we can write $|\mathfrak{S}_{2n}(\nu)|$
as  $\left|\mathfrak{S}_{2n}\left(\begin{bmatrix} \nu & n-\nu \\ n-\nu & \nu\end{bmatrix}\right)\right|$.
So
$$
|\mathfrak{S}_{2n}(\nu)|\, \sim\, \frac{(n!)^4}{(\nu!)^2((n-\nu)!)^2}\, .
$$
Let us use Stirling's formula: if we let $x=\nu/n$, then
$$
\frac{
2^{-2n}  (1+\delta)^{2\nu}(1-\delta)^{2n-2\nu} |\mathfrak{S}_{2n}(\nu)|}{(n!)^2}\,
\sim\, \frac{e^{-2n\ln(2)}e^{2nx\ln(1+\delta)}e^{2n(1-x)\ln(1-\delta)}e^{-2nx\ln(x)-2n(1-x)\ln(1-x)}}{2\pi nx(1-x)}\, ,
$$
as $n \to \infty$ for all those $x \in (\epsilon,1-\epsilon)$ for some $\epsilon>0$.
(For other $x$, {\em a priori} bounds show that their contribution is negligible.)

Then we may define
$$
f(x)\, =\, -\ln(2)+x\ln(1+\delta)+(1-x)\ln(1-\delta)-x\ln(x)-(1-x)\ln(1-x)\, ,
$$
so that
$$
\frac{
2^{-2n}  (1+\delta)^{2\nu}(1-\delta)^{2n-2\nu} |\mathfrak{S}_{2n}(\nu)|}{(n!)^2}\,
\sim\, \frac{e^{2nf(x)}}{2\pi nx(1-x)}\, ,
$$
as $n \to \infty$.
Then setting $x_*=(1+\delta)/2$, we may see that
$$
f(x_*)\, =\, 0\, ,\qquad
f'(x_*)\, =\, 0\ \text{ and }\ 
f''(x_*)\, =\, -\frac{4}{1-\delta^2}\, .
$$
So, taking
$x=x_*+z/\sqrt{n}$, we have, by Taylor expansion,
$$
\frac{
2^{-2n}  (1+\delta)^{2\nu}(1-\delta)^{2n-2\nu} |\mathfrak{S}_{2n}(\nu)|}{(n!)^2}\,
\sim\, \frac{e^{-4z^2/(1-\delta^2)}}{\pi n (1-\delta^2)/2}\, .
$$

Note that ${T}^{(2)}=B$, which is symmetric, and 
$$
B^2\, =\, \frac{1}{2}\begin{bmatrix}1+\delta^2 & 1-\delta^2\\
1-\delta^2&1+\delta^2\end{bmatrix}
$$
So
$$
I^{(2)}+J^{(2)}-\left({T}^{(2)}\right)^2\, =\, \begin{bmatrix}1-(\delta^2/2) & (\delta^2/2)\\
(\delta^2/2) & 1-(\delta^2/2)\end{bmatrix}
$$
So $\operatorname{det}\left(I^{(2)}+J^{(2)}-\left({T}^{(2)}\right)^2\right)$
equals $1-\delta^2$.
If we let $z_{\nu} = (\nu-n(1+\delta)/2)/\sqrt{n}$,
then $\Delta z = 1/\sqrt{n}$.
Finally, note that we were supposed to divide by $(2n)!$ instead of $(n!)^2$.
So we obtain
$$
\sum_{\nu=0}^{n} \frac{
2^{-2n}  (1+\delta)^{2\nu}(1-\delta)^{2n-2\nu} |\mathfrak{S}_{2n}(\nu)|}{(2n)!}\,
\sim\, \frac{1}{\binom{2n}{n}} \sum_{\nu=0}^{n}\frac{e^{-4z_{\nu}^2/(1-\delta^2)}}{\pi n (1-\delta^2)/2}\, .
$$
Since $\binom{2n}{n} \sim 2^{2n}/\sqrt{\pi n}$, this gives
$$
\frac{1}{\binom{2n}{n}} \sum_{\nu=0}^{n}\frac{e^{-4z_{\nu}^2/(1-\delta^2)}}{\pi n (1-\delta^2)/2}\,
\sim\, \frac{1}{2^{2n}}
\sum_{\nu=0}^{n}\frac{e^{-4z_{\nu}^2/(1-\delta^2)}}{\sqrt{\pi}\, (1-\delta^2)/2}\, \Delta z\,
\sim\, \frac{1}{2^{2n}} \int_{-\infty}^{\infty} \frac{e^{-4z^2/(1-\delta^2)}}{\sqrt{\pi}\, (1-\delta^2)/2}\, dz\,
=\, \frac{2^{-2n}}{\sqrt{1-\delta^2}}\, ,
$$
as $n \to \infty$.
This verifies the theorem, again.
Note that this shows the denominator $\sqrt{\mathcal{D}[\mathcal{C}]}$ is due to Gaussian fluctuations, as usual.
This is also standard in the theoretical physics community, where it goes by the name of ``1 loop correction''
to the tree (or $0$-loop approximation). See for example Section 3.4 of Itzykson and Drouffe, where it is applied
to the specific example of the Ising and $O(n)$ models using the $1/d$ expansion \cite{ItzyksonDrouffe}.

Before moving on from Example 2, let us consider the limiting case, $\delta=1$.
This means 
$B = \begin{bmatrix} 1 & 0 \\ 0 & 1 \end{bmatrix}$, instead of assuming all matrix entries are strictly positive.
Then we must have $\nu=n$.
So
$$
\frac{1}{(2n)!}\, \operatorname{perm}\left(A^{(n,2)}\right)\, =\, 1/\binom{2n}{n}\, ,
$$
so that
$$ 
\frac{1}{(2n)!}\, \operatorname{perm}\left(A^{(n,2)}\right)\,  \sim\, 2^{-2n}\, \sqrt{n \pi}\, .
$$
What goes wrong is that since some matrix entries of $B$ are $0$, we can have a collapse of the spectral gap.
Since $B={T}^{(2)}=I^{(2)}$, we have $\left({T}^{(2)}\right)^2=I^{(2)}$.
Then $I^{(2)}+J^{(2)}-\left({T}^{(2)}\right)^2=J^{(2)}$ which is a 
rank-1 projection. One eigenvalue is 1, and the other is $0$. So the determinant is $0$.

\begin{remark}
Our hypothesis for our main result is that all matrix entries of $B$ are strictly positive. In this case, by the Perron-Frobenius theorem,
there is a positive spectral gap of ${T}^{(2)}$.
(See for example, \cite{Simon}, Theorem II.5.1.)
\end{remark}
This observation may be applied to the paper of Pal and that of Tripathi as well, because they assume an exponential form
for the kernel which is strictly positive.
Then the Krein-Rutman theorem applies in place of the finite dimensional Perron-Frobenius
theorem.

\section{Proof of the main result}

\subsection{Asymptotics of Pinsky's formula}

We recall that Stirling's formula may be stated as
$$
e^{n\ln(n) - n}\, \sqrt{2\pi n}\, e^{1/(12n+1)}\, <\, n!\, <\, e^{n\ln(n) - n}\, \sqrt{2\pi n}\, e^{1/(12n)}\, .
$$
Using this, and equation (\ref{eq:Pinsky}) we may see that 
$$
\frac{1}{(m\cdot n)!}\, |\mathfrak{S}_{m,n}(Q)|\, 
\sim\, \frac{\exp(n \mathcal{L}[Q/n])}{(2\pi n)^{(m-1)^2/2}}\, 
\mathcal{K}[Q/n]\, ,\ \text{ as $n \to \infty$,}
$$
under appropriate assumptions on $Q$, 
where for an $m\times m$ matrix $X = \big(x_{r,s}\, :\, r,s \in [m]\big)$, we have
$$
\mathcal{L}[X]\, =\, -m\ln(m) - \sum_{(r,s) \in [m]\times [m]} x_{r,s} \ln\left(x_{r,s}\right)\, ,
$$
and where
$$
\mathcal{K}[X]\, =\, m^{-1/2}  \left(\prod_{(r,s) \in [m] \times [m]} x_{r,s}\right)^{-1/2}\, .
$$
Now consider the formula
$$
\frac{1}{(m\cdot n)!}\, \operatorname{perm}\left(A^{(m,n)}\right)\,
=\,  \sum_{Q \in \mathcal{M}(m,n)}  \frac{|\mathfrak{S}_{m,n}(Q)|}{(m\cdot n)!}\,
\exp\left(n\mathcal{P}_B[Q/n]\right)\, ,
$$
where $\mathcal{M}(m,n)$ is the set of matrices described before, and
$$
\mathcal{P}_B[X]\, =\, 
\sum_{r=1}^{m} \sum_{s=1}^{m} x_{r,s} \ln (b_{r,s})\, .
$$
The conditions on $Q$ are that $q_{r,s}\gg 1$ for all $(r,s)\in[m]^2$.
We may take a sequence $Q^{(n)} \in \mathcal{M}(m,n)$ (one for each $n$) such that
$Q^{(n)}/n \to {T}^{(m)}$.
This is possible because ${T}^{(m)}$ is a doubly stochastic matrix.
Note that all components of ${T}^{(m)}$ are strictly positive so that $Q^{(n)} \approx n {T}^{(m)}$ does
satisfy the conditions to apply Stirling's formula.
Hence,
$$
\mathcal{L}[Q^{(n)}/n] + \mathcal{P}_B[Q^{(n)}/n] \to -m\ln(m) - \sum_{(r,s) \in [m]\times [m]}
t^{(m)}_{r,s} \ln(v_rw_s)\,
=\, -m\ln(m) - \sum_{r=1}^m \ln(v_r) - \sum_{s=1}^{m} \ln(w_s),\, $$
where the last equality follows from the fact that ${T}^{(m)}$
is doubly stochastic.

This gives the ``leading order'' behavior. Now suppose that for some finite $n$ we have
$$
q_{r,s}\, =\, n t^{(m)}_{r,s} + n^{1/2} z_{r,s}\, ,
$$
for some matrix $Z = \big(z_{r,s}\, :\, r,s \in [m]\big)$ with row sums and column sums both $0$.
Then, Taylor expanding $\mathcal{L}$ to second order, we have
$$
n \mathcal{L}[Q/n] + n \mathcal{P}_B[Q/n]\, 
=\, n  \mathcal{L}[{T}^{(m)} + n^{-1/2} Z] + n \mathcal{P}_B[{T}^{(m)}+n^{-1/2} Z]\, ,
$$
which has first variation 0, and whose second variation gives 
$$
n  \mathcal{L}[{T}^{(m)} + n^{-1/2} Z] + n \mathcal{P}_B[{T}^{(m)}+n^{-1/2} Z]\, 
=\, n\left(-m\ln(m) - \sum_{r=1}^m \ln(v_r) - \sum_{s=1}^{m} \ln(w_s)\right)
-\sum_{(r,s) \in [m]\times [m]} \frac{z_{r,s}^2}{2t^{(m)}_{r,s}} + O(n^{-1/2})\, .
$$
Now we want to account for the terms in the last summation: matrices $Q = {T}^{(m)} + n^{-1/2} Z$ where
$Z$ has row-sums and column-sums all equal to $0$.

We will define a $(m-1)\times (m-1)$ matrix $\Xi = \big(\xi_{\rho,\sigma}\, :\, \rho,\sigma \in [m-1]\big)$
with arbitrary entries. We can let $z_{\rho,\sigma} = \xi_{\rho,\sigma}$ for $(\rho,\sigma) \in [m-1]^2$.
But then we adjust the values $z_{\rho,m}$, $z_{m,\sigma}$ and $z_{m,m}$
so that $Z$ has all row-sums and column-sums equal to $0$.
For each $(\rho,\sigma) \in [m-1]^2$, define the matrix $F^{(\rho,\sigma)}$ where
$$
F^{(\rho,\sigma)}_{r,s}\, =\, \begin{cases} 1 & \text{ if $(r,s) = (\rho,\sigma)$,}\\
-1 & \text{ if $(r,s) =(\rho,m)$ or $(r,s) = (m,\sigma)$,}\\
1 & \text{ if $(r,s) = (m,m)$,}
\end{cases}
$$
so that $F^{(\rho,\sigma)}$ has all row-sums and all column-sums equal to $0$.
Then we consider 
$$
Z\, =\, \sum_{(\rho,\sigma) \in [m-1]^2} \xi_{\rho,\sigma} F^{(\rho,\sigma)}\, .
$$
Let $\mathcal{G} : \R^{m\times m} \to \R$ be the positive-definite quadratic form
$$
\mathcal{G}(Z)\, =\,  \sum_{(r,s) \in [m]^2} \frac{z_{r,s}^2}{t^{(m)}_{r,s}}\, ,
$$
and let $\mathfrak{G} = \big(\mathfrak{g}_{(\rho,\sigma),(\rho',\sigma')}\, :\, (\rho,\sigma),(\rho',\sigma') \in [m-1]^2\big)$
be the positive semidefinite matrix such that 
$$
\mathcal{G}\left(\sum_{(\rho,\sigma) \in [m-1]^2} \xi_{\rho,\sigma} F^{(\rho,\sigma)}\right)\,
=\, \sum_{(\rho,\sigma) \in [m-1]^2} \sum_{(\rho',\sigma') \in [m-1]^2} 
\mathfrak{g}_{(\rho,\sigma),(\rho',\sigma')} \xi_{\rho,\sigma} \xi_{\rho',\sigma'}\, .
$$
Let $\widetilde{\mathcal{G}} : \R^{(m-1)\times (m-1)} \to \R$ be the positive-definite quadratic form
$$
\widetilde{\mathcal{G}}(\Xi)\, =\, \sum_{(\rho,\sigma) \in [m-1]^2} \sum_{(\rho',\sigma') \in [m-1]^2} 
\mathfrak{g}_{(\rho,\sigma),(\rho',\sigma')} \xi_{\rho,\sigma} \xi_{\rho',\sigma'}\, .
$$
Then, using the rigorous Stirling bounds 
$$
\frac{1}{(m\cdot n)!}\, \operatorname{perm}\left(A^{(m,n)}\right)\,
=\, (1+O(n^{-1/2}))\, \frac{m^{-mn} \left(\prod_{r=1}^{m} v_r\right)^{-n} 
\left(\prod_{s=1}^{m} w_s\right)^{-n}}
{
m^{1/2} \left(\prod_{(r,s) \in [m]\times [m]} t^{(m)}_{r,s} \right)^{1/2}
}
\sum_{\Xi \in \widetilde{M}(m,n)}
\frac{\exp\left(-\widetilde{\mathcal{G}}(\Xi)/2\right)}{(2\pi n)^{(m-1)^2/2}}\, ,
$$
where $\widetilde{M}(m,n)$ is the set of matrices $\Xi$ such that $Q^{(n)} = n {T}^{(m)} + n^{1/2} Z$
satisfies $Q^{(n)} \in \mathcal{M}(m,n)$ for $Z$ being the matrix 
equal to $\sum_{(\rho,\sigma) \in [m-1]^2} \xi_{\rho,\sigma} F^{(\rho,\sigma)}$.
Note that the conditions $q^{(n)}_{r,s} \in (0,n)$ are automatically satisfied for $Z$ up to order $n^{1/2}$
because ${T}^{(m)}$ is assumed to have no zero entries (hence also no $1$ entries by the double-stochastic condition).
The other condition is $q^{(n)}_{r,s} \in \Z$, which this forces 
\begin{equation}
\label{eq:preconditions}
z_{r,s} \in \left\{-n^{1/2} t^{(m)}_{r,s} + n^{-1/2} q_{r,s}\, :\, q_{r,s} \in \Z\right\}\, ,
\end{equation}
for each $(r,s) \in [m]^2$.
This forces 
\begin{equation}
\label{eq:conditions}
\xi_{\rho,\sigma} \in \left\{-n^{1/2} t^{(m)}_{\rho,\sigma} + n^{-1/2} q_{\rho,\sigma}\, :\, q_{\rho,\sigma} \in \Z\right\}\, ,
\end{equation}
for each $(\rho,\sigma) \in [m-1]^2$. But since each $F^{(\rho,\sigma)}$ has integer components, if the conditions of (\ref{eq:conditions}) are satisfied
for each $(\rho,\sigma) \in [m-1]^2$, then that does imply conditions (\ref{eq:preconditions}) for each $(r,s) \in [m]^2$.

In turn, this all means that $\Delta \xi_{\rho,\sigma} = n^{-1/2}$ for each $(\rho,\sigma) \in [m-1]^2$.
Therefore,
\begin{equation}
\begin{split}
&\hspace{-2cm}
\lim_{n \to \infty} 
\frac{m^{mn} \left(\prod_{r=1}^{m} v_r\right)^{n} 
\left(\prod_{s=1}^{m} w_s\right)^{n}}
{(m\cdot n)!}\, 
\operatorname{perm}\left(A^{(m,n)}\right)\\
&\hspace{2cm}=\, \frac{1}
{
m^{1/2} \left(\prod_{(r,s) \in [m]\times [m]} t^{(m)}_{r,s} \right)^{1/2}
} 
\lim_{n \to \infty} 
\sum_{\Xi \in \widetilde{M}(m,n)}
\frac{\exp\left(-\widetilde{\mathcal{G}}(\Xi)/2\right)}
{(2\pi)^{(m-1)^2/2}}\, 
\prod_{(\rho,\sigma) \in [m-1]^2} \Delta \xi_{\rho,\sigma}\\
&\hspace{2cm}=\, \frac{1}
{
m^{1/2} \left(\prod_{(r,s) \in [m]\times [m]} t^{(m)}_{r,s} \right)^{1/2}
} 
\int_{\R^{(m-1)\times (m-1)}}
\frac{\exp\left(-\widetilde{\mathcal{G}}(\Xi)/2\right)}
{(2\pi)^{(m-1)^2/2}}\, 
\prod_{(\rho,\sigma) \in [m-1]^2} d\xi_{\rho,\sigma}\, .
\end{split}
\end{equation}
In other words, performing the Gaussian integral,
\begin{equation}
\label{eq:LDP}
\lim_{n \to \infty} 
\frac{m^{mn} \left(\prod_{r=1}^{m} v_r\right)^{n} 
\left(\prod_{s=1}^{m} w_s\right)^{n}}
{(m\cdot n)!}\, 
\operatorname{perm}\left(A^{(m,n)}\right)\,
=\,   \frac{1}
{
m^{1/2} \left(\prod_{(r,s) \in [m]\times [m]} t^{(m)}_{r,s} \right)^{1/2}
}\, 
\cdot \frac{1}{\sqrt{\det(\mathfrak{G})}}\, .
\end{equation}

\subsection{Combinatorial simplification of the determinant}

The main contribution here is the following exact result.
\begin{lemma}
\label{lem:mat}
With the set-up as above,
$$
 \frac{1}
{
m^{1/2}\left(\prod_{(r,s) \in [m]\times [m]} t^{(m)}_{r,s} \right)^{1/2}
}\, 
\cdot \frac{1}{\sqrt{\det(\mathfrak{G})}}\, 
=\, \frac{1}{\sqrt{\det\left(I^{(m)}+J^{(m)}- {T}^{(m)} \left({T}^{(m)}\right)^T\right)}}\, .
$$
\end{lemma}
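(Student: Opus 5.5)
The plan is to recognise $\mathfrak{G}$ as the Gram matrix of the diagonal form $\mathcal{G}$ restricted to the subspace $V\subset\R^{m\times m}$ of matrices with zero row and column sums. Then trade $\det(\mathfrak{G})$ for a determinant on the complementary, $(2m-1)$-dimensional space of constraints via a Schur-complement or duality identity. Write $D=\operatorname{diag}(1/t^{(m)}_{r,s})$ on $\R^{m^2}$, and let $F:\R^{(m-1)^2}\to\R^{m^2}$ be the linear map $\Xi\mapsto\sum\xi_{\rho,\sigma}F^{(\rho,\sigma)}$, so that $\mathfrak{G}=F^TDF$. Let $L':\R^{m^2}\to\R^{2m-1}$ record all $m$ row sums and the first $m-1$ column sums; the last column sum is redundant. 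Then $\ker L'=V=\operatorname{range}F$.

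Step 1 (lattice bookkeeping). I will check that the columns of $F$ form a $\Z$-basis of $V\cap\Z^{m^2}$; its entries in positions $[m-1]^2$ are the identity. Next I pick $2m-1$ integer matrices $E$, supported on the last row and last column, with $L'E=I$, so that $M=[F\;E]$ is unimodular, $\det M=\pm1$. Then the rows of $M^{-1}$ dual to $E$ are exactly $L'$, because $L'F=0$ and $L'E=I$. From $(M^TDM)^{-1}=M^{-1}D^{-1}M^{-T}$ and the Schur-complement formula for the $F$-block we get
$$\det(D)=\det(M^TDM)=\frac{\det(F^TDF)}{\det(L'D^{-1}L'^T)},$$
that is, $\det(\mathfrak{G})=\det(L'D^{-1}L'^T)\big/\prod_{r,s}t^{(m)}_{r,s}$. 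This unimodularity step is the one I expect to need the most care, since any non-unit covolume would introduce a spurious constant; I will verify it by explicit elimination.

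Step 2 (computing $L'D^{-1}L'^T$). Let $L$ be the full $2m\times m^2$ row/column-sum map. Since $D^{-1}=\operatorname{diag}(t_{r,s})$ and $T^{(m)}$ is doubly stochastic,
$$LD^{-1}L^T=\begin{bmatrix} I^{(m)} & T^{(m)}\\ (T^{(m)})^T & I^{(m)}\end{bmatrix}.$$
Dropping the last column constraint deletes the last row and column, leaving the blocks $I^{(m)}$, $T'$, $T'^T$, $I^{(m-1)}$, where $T'$ is $T^{(m)}$ with its last column $\boldsymbol{t}_m$ removed. A Schur complement gives $\det(L'D^{-1}L'^T)=\det(I^{(m)}-T'T'^T)=\det(N+\boldsymbol{t}_m\boldsymbol{t}_m^T)$ with $N=I^{(m)}-T^{(m)}(T^{(m)})^T$. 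Combining with Step 1, the left side of the lemma equals $\big(m\det(N+\boldsymbol{t}_m\boldsymbol{t}_m^T)\big)^{-1/2}$.

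Step 3 (rank-one comparison). $N$ is symmetric and $N\mathbf{1}=0$ by double stochasticity. By Perron--Frobenius (all $t_{r,s}>0$, as in the Remark), the eigenvalue $1$ of $TT^T$ is simple, so $\ker N=\operatorname{span}(\mathbf{1})$. Hence $\operatorname{adj}(N)=\frac{\lambda}{m}\mathbf{1}\mathbf{1}^T$, with $\lambda$ the product of the nonzero eigenvalues of $N$. The matrix determinant lemma $\det(N+uu^T)=u^T\operatorname{adj}(N)u$ then gives two evaluations. With $u=\mathbf{1}/\sqrt m$ (so $uu^T=J^{(m)}$) we get $\det(I+J-TT^T)=\lambda$. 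With $u=\boldsymbol{t}_m$, whose entries sum to $1$, we get $\det(N+\boldsymbol{t}_m\boldsymbol{t}_m^T)=\lambda/m$. Therefore $m\det(N+\boldsymbol{t}_m\boldsymbol{t}_m^T)=\det(I^{(m)}+J^{(m)}-T^{(m)}(T^{(m)})^T)$, which is the claimed identity.
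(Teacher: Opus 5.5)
Your argument is correct. Step 1 reaches the same intermediate identity as the paper, $\det(\mathfrak{G})=\det(\mathfrak{H})/\prod_{r,s}t^{(m)}_{r,s}$, where $\mathfrak{H}=L'D^{-1}L'^T$ is the covariance of $(\mathsf{R}_1,\dots,\mathsf{R}_m,\mathsf{C}_1,\dots,\mathsf{C}_{m-1})$.

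The paper gets this identity from the Gaussian conditional-distribution formula. That formula implicitly uses that the change of variables $Z\mapsto(\Xi,L'Z)$, i.e.\ $M^{-1}$, has determinant $\pm1$. Your unimodular completion makes this explicit, and the check is immediate. Take $E$ to consist of $e_{r,m}$ and $e_{m,\sigma}-e_{m,m}$. Then $M$ is block triangular with identity on the $[m-1]^2$ positions. Moreover, the restrictions of $L'$ and $E$ to the last row and column are integer matrices whose product is $I^{(2m-1)}$, so both have determinant $\pm1$.

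The genuine divergence is in how you evaluate $\det(\mathfrak{H})$. The paper passes up to the $2m\times 2m$ covariance $\mathfrak{K}$ of all row sums and negated column sums, and then:
\begin{itemize}
\item uses the Kirchhoff-type fact that a matrix with zero row and column sums has constant adjugate;
\item identifies $\det(\mathfrak{H})$ with a principal minor of $\mathfrak{K}$ via a sign conjugation;
\item fixes the constant from the spectrum $1\pm\sqrt{\lambda_r}$ of $\mathfrak{K}$ together with the matrix determinant lemma for $\mathfrak{K}+J^{(2m)}$.
\end{itemize}

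You instead Schur-complement away the $I^{(m-1)}$ block. You then apply the same adjugate and rank-one device to the $m\times m$ matrix $N=I^{(m)}-T^{(m)}(T^{(m)})^T$, using the two vectors $\mathbf{1}/\sqrt{m}$ and $\boldsymbol{t}_m$. This keeps everything at size $m$ and avoids pairing singular vectors, including the separate $\lambda=0$ case. It even makes Perron--Frobenius inessential for the identity. Indeed, $m\det(N+\boldsymbol{t}_m\boldsymbol{t}_m^T)=\det(N+J^{(m)})$ holds for every symmetric $N$ with $N\mathbf{1}=0$, since both sides vanish when $\dim\ker N\ge 2$.

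The paper's route buys two things in exchange. First, it exhibits the whole spectrum of the constraint covariance through the singular values of $T^{(m)}$; up to sign conjugation, this is the same block matrix that reappears as the Hessian in the appendices. Second, it makes manifest that the result does not depend on which of the $2m$ row and column constraints is discarded. In your version that independence shows up only through the fact that every row and column of $T^{(m)}$ sums to $1$.
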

When combined with equation (\ref{eq:LDP}), this proves Theorem \ref{thm:main}.
We will prove this lemma in the rest of this section.
Firstly, note that
$$
\int_{\R^{m\times m}} \frac{\exp\left(-\mathcal{G}(Z)/2\right)}{(2\pi)^{m^2/2}}\,
\prod_{r=1}^{m} \prod_{s=1}^{m} dz_{r,s}\, =\, 
\left(\prod_{r=1}^{m} \prod_{s=1}^{m} t^{(m)}_{r,s}\right)^{1/2}\, ,
$$
because 
$$
\exp\left(-\mathcal{G}(Z)/2\right)\, =\, \prod_{r=1}^{m} \prod_{s=1}^{m}
\exp\left(-\frac{z_{r,s}^2}{2t^{(m)}_{r,s}}\right)\, .
$$
So this is the standard Gaussian integral calculation: it just happens that $\mathcal{G}$ is already diagonal in 
the standard basis.

But now we are more interested in the Gaussian integral
$$
\int_{\R^{(m-1)\times (m-1)}}
\frac{\exp\left(-\widetilde{\mathcal{G}}(\Xi)/2\right)}
{(2\pi)^{(m-1)^2/2}}\, 
\prod_{(\rho,\sigma) \in [m-1]^2} d\xi_{\rho,\sigma}\,
=\,  \sqrt{\det(\mathfrak{G}^{-1})}\, .
$$
Compared to the diagonal quadratic form $\mathcal{G}$, we have
$$
\widetilde{\mathcal{G}}(\Xi)\, =\, \mathcal{G}\left(\sum_{(\rho,\sigma) \in [m-1]^2} \xi_{\rho,\sigma} F^{(\rho,\sigma)}\right)\, ,
$$
so that we are conjugating by a linear transformation.
But we may think of it in another way.

Let $\mathsf{Z}$ be the random matrix in $\R^{m\times m}$ with independent
matrix entries, such that $(\mathsf{Z})_{r,s}$ is a centered normal
$\mathcal{N}(0,t^{(m)}_{r,s})$ 
random variable for each $(r,s) \in [m]^2$, meaning the variance
is $t^{(m)}_{r,s}$.
So we have
$$
\mathbf{P}\left(\mathsf{Z}\in dZ\right)\, =\, 
\frac{\exp\left(-\mathcal{G}(Z)/2\right)}
{(2\pi)^{m^2/2}\, \left(\prod_{r=1}^{m} \prod_{s=1}^{m} t^{(m)}_{r,s}\right)^{1/2}}\, 
\prod_{(\rho,\sigma) \in [m-1]^2} d\xi_{\rho,\sigma}\, .
$$

For each $r \in [m]$, let $\mathsf{R}_{r}$ be the random variable for the row sum
$$
\mathsf{R}_{r}\, =\, \sum_{s=1}^{m} (\mathsf{Z})_{r,s}\, ,
$$
and for each $\sigma \in [m-1]$, let $\mathsf{C}_{\sigma}$ be the random variable for the column sum
$$
\mathsf{C}_{\sigma}\, =\, \sum_{r=1}^{m} (\mathsf{Z})_{r,\sigma}\, .
$$
Note that we do not need to define $\mathsf{C}_m$ since $\mathsf{C}_m=\mathsf{R}_1+\dots+\mathsf{R}_m-(\mathsf{C}_1+\dots+\mathsf{C}_{m-1})$.
Only $2m-1$ of the row-sums and column-sums are linearly independent.

Then
$$
\mathbf{P}\Big(\big((\mathsf{Z})_{\rho,\sigma}\, :\, (\rho,\sigma) \in [m-1]^2\big)\in d\Xi\, \Big|\, 
\mathcal{E}_0\Big)\, =\, 
\frac{\exp\left(-\widetilde{\mathcal{G}}(\Xi)/2\right)}
{(2\pi)^{(m-1)^2/2}\, \sqrt{\det(\mathfrak{G}^{-1})}}\, 
\prod_{(\rho,\sigma) \in [m-1]^2} d\xi_{\rho,\sigma}\, ,
$$
where $\mathcal{E}_0$ is the event
$$
\mathcal{E}_0\, =\, \bigcap_{r\in [m]} \{\mathsf{R}_{r}=0\} \cap \bigcap_{\sigma \in [m-1]} \{\mathsf{C}_{\sigma}=0\}\, .
$$
Indeed, this is by construction, since we started with a discrete approximation of the jointly Gaussian random variables
and imposed the condition by hand.
But now we use the well-known result that another way to obtain the conditional distribution
is to use the Schur complement formula.

Using the conditional distribution formula and the Schur complement formula for the determinant of a block matrix,
we have derived the following:
$$
\det\left(\mathfrak{G}^{-1}\right)\, =\, \frac{\prod_{r=1}^{m} \prod_{s=1}^{m} t_{r,s}^{(m)}}{\det(\mathfrak{H})}\, ,
$$
where $\mathfrak{H}^{-1}$ is the precision matrix (inverse of the covariance) for $(\mathsf{R}_1,\dots,\mathsf{R}_m,\mathsf{C}_1,\dots,\mathsf{C}_{m-1})$.
See for example \cite{Muirhead} Theorem 1.2.11 for a review 
of the conditional distribution.

Because of this, Lemma \ref{lem:mat} will be proved if we establish
$$
\det(\mathfrak{H})\, =\, \frac{1}{m} \det\left(I^{(m)}+J^{(m)} 
-  {T}^{(m)} \left({T}^{(m)}\right)^T\right)\, .
$$
Instead of calculating the determinant of $\mathfrak{H}$ directly, 
let us make a slightly different matrix $\mathfrak{K}$ which is a $(2m)\times (2m)$ matrix.
Let us define random variables $\mathsf{X}_1,\dots,\mathsf{X}_{2m}$ such that
$$
\forall  r \in [m]\, ,\ \text{ we have }\
\mathsf{X}_r\ =\, \mathsf{R}_r\ \text{ and }\
\mathsf{X}_{m+r}\, =\, -\mathsf{C}_r\, .
$$
Then we let $\mathfrak{K}$ be the covariance matrix for this vector. By inspection
$$
\mathfrak{K}\, =\, \begin{bmatrix} I^{(m)} & -{T}^{(m)} \\ -\left({T}^{(m)}\right)^T & I^{(m)} \end{bmatrix}\, .
$$
Note that the row-sums and column-sums of $\mathfrak{K}$ are all $0$. Therefore, by a well-known property of the adjugate
matrix, we have
$$
\operatorname{adj}(\mathfrak{K})\, =\, (2m) c J^{(2m)}\, ,
$$
for some constant $c$.
This fact for adjugate matrices is usually discussed in the context of Kirchhoff's matrix-tree
theorem because the graph Laplacian does have all row-sums and column-sums equal to $0$.

In other words for each $i,j \in [2m]$ the $(i,j)$ cofactor $C_{i,j} = \det\left(\Big((\mathfrak{K})_{a,b}: a \in [2m]\setminus\{i\}, b\in[2m]\setminus \{j\}\Big)\right)$ is equal to $(-1)^{i+j} c$.
But it is easy to see that the $(2m,2m)$ cofactor $C_{2m,2m}$ is equal to $\det(\mathfrak{H})$ because, defining
${U}$ to be the $m$-by-$(m-1)$ rectangular matrix such that
$$
({U})_{r,\sigma} = ({T}^{(m)})_{r,\sigma}\, ,
$$
(so that it is just the truncation), we have by inspection
$$
\mathfrak{H}\, =\, \begin{bmatrix} I^{(m)} & {U} \\ {U}^T & I^{(m-1)} \end{bmatrix}\, ,
$$
and so
$$
\mathfrak{H}\, =\, \begin{bmatrix} I^{(m)} & 0 \\ 0 &-I^{(m-1)} \end{bmatrix} 
\Big((\mathfrak{K})_{i,j} : (i,j) \in [2m-1]^2\Big) \begin{bmatrix} I^{(m)} & 0 \\ 0 &-I^{(m-1)} \end{bmatrix}\, .
$$
The conjugation by these diagonal matrices does not change the determinant because they are self-inverse
(involutory) because $(-1)^{m-1} (-1)^{m-1}=1$.
So
$$
\det(\mathfrak{H})\, =\, c\, .
$$

It remains to calculate the constant $c$. 
We can do this by fully describing the spectrum of $\mathfrak{K}$.
Note that if $\boldsymbol{\phi}$ is an eigenvector of ${T}^{(m)} \left({T}^{(m)}\right)^T$ with eigenvalue $\lambda>0$,
then, setting $\boldsymbol{\psi} = \lambda^{-1/2} \left({T}^{(m)}\right)^T \boldsymbol{\phi}$, we have
$$
\mathfrak{K} \begin{bmatrix} \boldsymbol{\phi} \\ -\boldsymbol{\psi} \end{bmatrix}\,
=\, (1+\lambda^{1/2}) \begin{bmatrix} \boldsymbol{\phi} \\ -\boldsymbol{\psi} \end{bmatrix}\ \text{ and }
\mathfrak{K} \begin{bmatrix} \boldsymbol{\phi} \\ \boldsymbol{\psi} \end{bmatrix}\,
=\, (1-\lambda^{1/2}) \begin{bmatrix} \boldsymbol{\phi} \\ \boldsymbol{\psi} \end{bmatrix}\, .
$$
(To account for $\lambda=0$ we can pair up null vectors of 
$\left({T}^{(m)}\right)^T$, $\boldsymbol{\phi}$,
with null vectors of ${T}^{(m)}$, $\boldsymbol{\psi}$,
arbitrarily. They are equal in number by the rank-nullity theorem, as a matrix and its transpose share the same rank.)
But $\boldsymbol{\chi}^{(m)}$ is an eigenvector of ${T}^{(m)}\left({T}^{(m)}\right)^T$ with eigenvalue $\lambda=1$.
So (by the Perron-Frobenius theorem for  ${T}^{(m)}\left({T}^{(m)}\right)^T $) the unique null-vector of $\mathfrak{K}$
is 
$$
\boldsymbol{\chi}^{(2m)}\, .
$$
So if we take $\mathfrak{K}+J^{(2m)}$, then its determinant will be
$$
(0+1) \cdot 2 \cdot \left(\prod_{r=2}^{m} (1+\sqrt{\lambda_r})\right) \left(\prod_{r=2}^{m} (1-\sqrt{\lambda_r})\right)\, ,
$$
where $1=\lambda_1>\lambda_2\geq \dots \geq \lambda_m\geq 0$ are the eigenvalues of ${T}^{(m)}\left({T}^{(m)}\right)^T$.
But the determinant of $\mathfrak{K}$ itself is $0$.
So by the ``matrix determinant lemma,''
$$
2 \prod_{r=2}^{m} (1-\lambda_r)\, =\, \left(\boldsymbol{\chi}^{(2m)}\right)^T \operatorname{adj}(\mathfrak{K}) \boldsymbol{\chi}^{(2m)}\, .
$$
So we have
$$
2 \prod_{r=2}^{m} (1-\lambda_r)\, =\, (2m)c\left(\boldsymbol{\chi}^{(2m)}\right)^T J^{(2m)} \boldsymbol{\chi}^{(2m)}\, .
$$
But $J^{(2m)} \boldsymbol{\chi}^{(2m)}=\boldsymbol{\chi}^{(2m)}$, and $\boldsymbol{\chi}^{(2m)}$ is normalized.
So
$$
c\, =\, \frac{1}{m}\, \prod_{r=2}^{m} (1-\lambda_r)\, ,
$$
as desired.

\subsection{Probabilistic interpretation as a local central limit theorem}

The asymptotic evaluation of the permanent in our proof has a natural probabilistic interpretation in terms of a local central limit theorem (LCLT) for the Gibbs measure on the symmetric group already mentioned in the introduction.

More precisely, given a block uniform matrix $A^{(m,n)}$, we can define a probability measure on $S_{mn}$ where the probability of a permutation $\pi \in S_{mn}$ is proportional to its weight in the permanent:
$$
\mathbb{P}(\pi)\, =\, \frac{1}{\operatorname{perm}(A^{(m,n)})} \prod_{i=1}^{mn} a_{i,\pi_i}^{(m,n)}\, .
$$
Because the matrix $A^{(m,n)}$ is block uniform, the weight of $\pi$ depends only on its macroscopic block-to-block transition counts. Let $Q(\pi) \in \mathcal{M}(m,n)$ be the random block-count matrix, where $q_{r,s}(\pi)$ is the number of elements in block $r$ mapped to block $s$ by $\pi$. The permanent acts exactly as the partition function for this measure. 

Under this measure, the sequence of random matrices $Q(\pi)/n$ satisfies a law of large numbers, converging to the optimal doubly stochastic matrix $T^{(m)}$ given by the Schr\"odinger bridge equations. Furthermore, the saddle-point expansion we performed in Section 3.1 demonstrates that the scaled fluctuations
$$
Z^{(n)}\, =\, \frac{Q(\pi) - n T^{(m)}}{\sqrt{n}}
$$
satisfy a local central limit theorem. Specifically, $Z^{(n)}$ takes values in the $(m-1)^2$-dimensional affine subspace of matrices with zero row and column sums, and its probability mass function converges locally to the density of a centered multivariate Gaussian with precision matrix $\mathfrak{G}$. 

In this probabilistic framework, while the leading exponential term $n \Lambda[\mathcal{C}]$ in Pal's conjecture relates to the law of large numbers, while the algebraic prefactor $1/\sqrt{\mathcal{D}[\mathcal{C}]}$ is exactly the normalizing constant of this $(m-1)^2$-dimensional Gaussian distribution. Our derivation of the Fredholm determinant is therefore equivalent to evaluating the determinant of the covariance matrix of these Gaussian fluctuations around the optimal transport plan.

\appendix

\section{Complex block uniform matrices and the saddle point method}

Our result is just for the special case of block uniform functions.
In the case of twice differentiable functions, the proof of Pal's conjecture has been addressed already by Raghavendra Tripathi
using an investigation by Peter McCullagh.
We feel that our derivation for the special case has some pedagogic value.
In addition, these special matrices, the block uniform matrices, may be useful
for studying other phenomena.
Here we consider the analogous question for block uniform matrices with complex entries.

As before, consider a fixed, finite $m \in \N$ and let $B$ be an $m\times m$ matrix $B = \big(b_{r,s}\, :\, (r,s) \in [m]^2\big)$ such that
$$
\forall (r,s) \in [m]^2\, ,\ \text{ we have }\ b_{r,s} \in \C\, .
$$
Previously we had assumed all entries were positive. 
Now we allow the entries to be complex.
For each $n \in \N$, let us define the matrix $A^{(m,n)}$ such that $A^{(m,n)} = \big(a^{(m,n)}_{i,j}\, :\, (i,j) \in [mn]^2\big)$
given by the formula
$$
a_{i,j}^{(m,n)}\, =\, b_{\lceil i/n\rceil, \lceil j/n \rceil}\, .
$$

An important problem in mathematics is understanding the complexity of calculating the permanent for matrices with complex entries.
For example, for complex, Ginibre ensemble matrices, the conjecture of Aaronson and Arkhipov is that the problem is \#P-hard.
A more recent article that reiterates that basic conjecture, while improving the setup for the quantum Boson sampling algorithm
is \cite{Huh}, which also uses Ryser's formula
$$
\operatorname{perm}(A)\, =\, \sum_{\boldsymbol{y} \in \{0,1\}^n} (-1)^{n+\|\boldsymbol{y}\|^2} \prod_{j=1}^{n} \left(\boldsymbol{a}^{(j)} \cdot \boldsymbol{y}\right)\, ,
$$
where the $n\times n$ matrix $A = (a_{j,k}\, :\, (j,k) \in [n]^2)$ can be written as a row of column vectors $(\boldsymbol{a}^{(1)},\dots,\boldsymbol{a}^{(n)})$.
In fact, Huh invents his own formula for the permanent based on this, which is important for his improvements to the set-up of Aaronson
and Arkhipov.
One might note the similarity between a formula such as this one and the computational complexity
problems that are amenable to study using spin-glass techniques.

\subsection{The saddle point method conjecture}

We can write
$$
\frac{1}{(mn)!}\, \operatorname{perm}(A^{(m,n)})\,
=\, \binom{mn}{n,\dots,n}^{-1} (n!)^m \sum_{Q \in \mathcal{M}(m,n)} 
\left( \prod_{(r,s) \in [m]^2} \left(\frac{b_{r,s}^{q_{r,s}}}{q_{r,s}!}\right)\right)\, ,
$$
which could be written as 
$$
\frac{1}{(mn)!}\, \operatorname{perm}(A^{(m,n)})\,
=\, \binom{mn}{n,\dots,n}^{-1} (n!)^m [(x_1\cdots x_m y_1\cdots y_m)^n]
\left\{\exp\left(\sum_{(r,s) \in [m]^2} b_{r,s} x_r y_s\right)\right\}\, .
$$
Therefore, using Cauchy's integral formula, one can write
\begin{equation}
\begin{split}
\frac{1}{(mn)!}\, \operatorname{perm}(A^{(m,n)})\,
&=\, \binom{mn}{n,\dots,n}^{-1} 
(n!)^m\\    
&
\int_{[-\pi,\pi]^{2m}} 
F_{n,B}(x_1e^{i\theta_1},\dots,x_me^{i\theta _m},y_1e^{i\phi_1},\dots,y_me^{i\phi_m})\,
\left(\prod_{r=1}^m \frac{d\theta_r}{2\pi}\right)
\left(\prod_{s=1}^m \frac{d\phi_s}{2\pi}\right)\, ,
\end{split}
\end{equation}
where
$$
F_{n,B}(x_1,\dots,x_m,y_1,\dots,y_m)\,
=\, 
\frac{ \exp\left(\sum_{(r,s) \in [m]^2} b_{r,s} x_r y_s\right)}
{(x_1\cdots x_m y_1\cdots y_m)^n}\, .
$$
That suggests seeking the critical points of $F_{n,B}$: these gives the following generalization of the bridge equations
$$
\forall r \in [m]\, ,\ \text{ we have } \sum_{s\in[m]} b_{r,s} x_r y_s\, =\, n\ \text{ and }\
\forall s \in [m]\, ,\ \text{ we have } \sum_{r \in [m]} b_{r,s} x_r y_s\, =\, n\, ,
$$
where $\boldsymbol{x}$ and $\boldsymbol{y}$ are complex vectors in $\C^m$.
Letting $v_r = x_r/\sqrt{n}$ and $w_s = y_s/\sqrt{n}$, then we
recover
(\ref{eq:BridgeV}) and (\ref{eq:BridgeW}).
But now we seek all complex solutions.

Now considering the integral again, we have
$$
\frac{1}{(mn)!}\, \operatorname{perm}(A^{(m,n)})\,
=\, \binom{mn}{n,\dots,n}^{-1} 
(n!)^m F_{n,B}(\boldsymbol{x},\boldsymbol{y})
\int_{[-\pi,\pi]^{2m}} \exp(nG(\boldsymbol{\theta},\boldsymbol{\phi}))
\left(\prod_{r=1}^m \frac{d\theta_r}{2\pi}\right)
\left(\prod_{s=1}^m \frac{d\phi_s}{2\pi}\right)\, ,
$$
where
$$
G(\boldsymbol{\theta},\boldsymbol{\phi})\, =\, \sum_{r=1}^{m} \sum_{s=1}^{m} b_{r,s} v_r w_s (e^{i(\theta_r+\phi_s)}-1)-i\sum_{r=1}^{m} \theta_r - i\sum_{s=1}^{m} \phi_s\, .
$$
Using the bridge equations and Taylor expanding to second order gives
$$
G(\boldsymbol{\theta},\boldsymbol{\phi})\, =\, -\frac{1}{2}\, g(\boldsymbol{\theta},\boldsymbol{\phi})+o(\|\boldsymbol{\theta}\|^2+\|\boldsymbol{\phi}\|^2)\, ,
$$
where
$$
g(\boldsymbol{\theta},\boldsymbol{\phi})\, =\, 
\sum_{r=1}^{m} \sum_{s=1}^{m} b_{r,s} v_r w_s (\theta_r+\phi_s)^2\, =\, \begin{bmatrix} \boldsymbol{\theta}^T\, ,\ \boldsymbol{\phi}^T \end{bmatrix}
\begin{bmatrix} I^{(m)} & T^{(m)} \\ 
\left(T^{(m)}\right)^T & I^{(m)} \end{bmatrix} \begin{bmatrix} \boldsymbol{\theta}\\ \boldsymbol{\phi}\end{bmatrix}\, .
$$
Of course, we continue to observe a zero-mode corresponding to the symmetry $\boldsymbol{v} \leftarrow \lambda \boldsymbol{v}$
and
 $\boldsymbol{w} \leftarrow \lambda^{-1} \boldsymbol{w}$ for $\lambda \in \C \setminus \{0\}$.
But removing this zero-mode by hand, we see that the condition to have a stable critical point is that
\begin{equation}
\label{eq:Matrix}
R^{(2m)}\, =\, 
\begin{bmatrix} I^{(m)}+\frac{1}{2}\, J^{(m)} & T^{(m)}-\frac{1}{2}\, J^{(m)} \\ 
\left(T^{(m)}\right)^T-\frac{1}{2}\, J^{(m)} & I^{(m)}+\frac{1}{2}\, J^{(m)}  \end{bmatrix} 
\end{equation}
is a strictly accretive matrix, meaning that its numerical range is in the right half-plane $\{x+iy\, :\, x \in (0,\infty)\, ,\ y \in \R\}$.
Then, if we were to apply the standard Hayman saddle-point method, as in \cite{FlajoletSedgewick} we would 
obtain an asymptotic expression similar to Theorem \ref{thm:main}.
But it is true that we would need to verify the other technicalities of the Hayman method, such as verifying that the 
minor arcs have integrals which are negligible in the asymptotic limit.

With this as background, let us now make the following definition.
\begin{definition}
\label{def:1}
(a) Let $\mathcal{M}_1$ be the set of matrices $Z = \big(z_{r,s}\, :\, (r,s) \in [m]^2\big)$ of the form
$$
z_{r,s}\, =\, v_r w_s\, ,
$$
for $\boldsymbol{v},\boldsymbol{w} \in \C^n$ satisfying the following: the matrix $T^{(m)}(Z) = \big(t^{(m)}_{r,s}\, :\, (r,s) \in [m]^2\big)$,
given by
$$
t_{r,s}^{(m)}\, =\, b_{r,s} z_{r,s}\, ,
$$
has all row-sums and all column-sums equal to $1$. 

(b) For each $Z \in \mathcal{M}_1$, define $\mathcal{W}(Z) = 1/(z_{1,1}\cdots z_{m,m})$ to be the weight of $Z$.

(c) For $Z \in \mathcal{M}_1$, if $|\mathcal{W}(Z)| = \max_{Z' \in \mathcal{M}_1} |\mathcal{W}(Z')|$ then we say
$Z$ has a dominant weight.

(d) If the following condition is satisfied for each $Z \in \mathcal{M}_1$ which has a dominant weight, then we will say
$B$ is of ``simple type.'' The condition is that the Hessian replacement matrix (accounting for the zero-mode):
$$
R^{(2m)}(Z)\, \stackrel{\mathrm{def}}{:=}\,
\begin{bmatrix} I^{(m)}+\frac{1}{2}\, J^{(m)}  & T^{(m)}(Z) - \frac{1}{2}\, J^{(m)}\\
\left(T^{(m)}(Z)\right)^T - \frac{1}{2}\, J^{(m)} & I^{(m)} + \frac{1}{2}\, J^{(m)}\end{bmatrix}
$$ 
is a strictly accretive matrix,
meaning the numerical range is a subset of the right half-plane $\{x+iy\, :\, x \in (0,\infty)\, ,\ y \in \R\}$.
\end{definition}

\begin{conj}
\label{conj:1}
Suppose that $B$ is an $m \times m$ matrix of ``simple type'' according to the definition above.
Define $\mathcal{M}_2 \subset \mathcal{M}_1$ to be the subset of those matrices $Z \in \mathcal{M}_1$ which 
each have dominant weight.
Then
$$
\frac{1}{(mn)!}\, \operatorname{perm}(A^{(m,n)})\,
\sim\, 
\sum_{Z \in \mathcal{M}_2} \frac{m^{-mn} \mathcal{W}(Z)^n}
{\prod_{k=1}^{2m} \sqrt{\lambda_k(Z)}}\, ,\
\text{ as $n \to \infty$,}
$$
where $\lambda_1(Z),\dots,\lambda_{2m}(Z) \in \{x+iy\, :\, x \in (0,\infty)\, ,\ y\in\R\}$ are the eigenvalues of $R^{(2m)}(Z)$.
\end{conj}
In principle, the conjecture above is true as long as one can establish bounds for the integrand on the ``minor arcs.''
This is the standard Hayman method, as in 
Flajolet and Sedgewick \cite{FlajoletSedgewick}.

A careful analysis of the asymptotics of an integral as we have stated it should start from a reference
such as Pham \cite{Pham}.
Another more recent textbook is \cite{PemantleWilsonMelczer}.
It may be a topic for further study to perform that careful analysis.

\subsection{Main example}
Consider the one-parameter complex deformation of the $2\times 2$ block-uniform matrix, where the off-diagonal entries are replaced by a parameter $z \in \mathbb{C}$:
$$
B(z)\, =\, \begin{bmatrix} 1 & z \\ z & 1 \end{bmatrix}\, .
$$
In order to simplify the calculations let us make a small change.
Note that for any matrix $B$ if we define $\widetilde{B}$ by the formula
$$
(\widetilde{B})_{r,s}\, =\, (B)_{r,s} \mathfrak{X}_r \mathfrak{Y_s}\, ,
$$
then we have
$$
\operatorname{perm}\big(\widetilde{B} \otimes (n J^{(n)})\big)\, =\, (\mathfrak{X}_1\cdots \mathfrak{X}_m \cdot \mathfrak{Y}_1 \cdots \mathfrak{Y}_m)^n
\operatorname{perm}\big(B\otimes (nJ^{(n)})\big)\, .
$$
If we use $(\mathfrak{X}_1,\mathfrak{X}_2) = (\mathfrak{Y}_1,\mathfrak{Y}_2) = (1,1/z)$ we obtain
$$
\widetilde{B}(z)\, =\, \begin{bmatrix} 1 & 1 \\ 1 & 1/z^2 \end{bmatrix}
$$
Now, let us also denote $\xi = 1/z$.
Then this
is an explicitly calculable model because for $A^{(2,n)} = \widetilde{B} \otimes (n J^{(n)})$, we have
$$
\frac{1}{(2n)!}\, \operatorname{perm}(A^{(2,n)})\, =\, \binom{2n}{n}^{-1} \sum_{k=0}^{n} \binom{n}{k}^2 \xi^{2k}\,
=\, \binom{2n}{n}^{-1} (1-\xi^2)^n P_n\left(\frac{1+\xi^2}{1-\xi^2}\right)\, ,
$$
where $P_n(z)$ is the Legendre polynomial
$$
P_n(z)\, =\, \frac{1}{2^n}\, \sum_{k=0}^{n} \binom{n}{k}^2(z+1)^k(z-1)^{n-k}\, .
$$
Then $\mathcal{M}_1=\{Z_+,Z_-\}$ where
$$
Z_{\pm} =\, \frac{1}{\xi^2\pm \xi} \begin{bmatrix} \xi^2 & \pm \xi \\ \pm \xi & 1 \end{bmatrix}\, ,\ \text{ so }\
T^{(2)}(Z_{\pm}) =\, \frac{1}{\xi\pm 1} \begin{bmatrix} \xi & \pm 1 \\ \pm 1 & \xi \end{bmatrix}\, .
$$
This gives $\mathcal{W}(Z_{\pm}) = (1\pm \xi^{-1})^{2}$.
If the real part of $\xi$ is positive (which happens exactly when the real part of $z$ is positive), we have $\mathcal{M}_2 = \{Z_+\}$.
If the real part of $\xi$ is negative (which happens when the real part of $z$ is negative), we have $\mathcal{M}_2 = \{Z_-\}$.
If $\xi$ is purely imaginary then both weights are dominant and $\mathcal{M}_2  = \mathcal{M}_1$. This is the truly oscillatory regime.

Note that
$$
R^{(4)}(Z_{\pm})\, =\, \frac{1}{4(\xi\pm 1)}\, \begin{bmatrix} 5\xi\pm 5 & \xi\pm1 & 3\xi \mp 1 & \pm 3-\xi\\
\xi\pm 1 & 5\xi\pm 5 & \pm 3 - \xi & 3\xi \mp 1 \\
  3\xi \mp 1 & 3-\xi& 5\xi\pm 5 & \xi\pm 1 \\
\pm 3 - \xi & 3\xi \mp 1 & \xi\pm 1 & 5\xi\pm 5
\end{bmatrix}
$$
with eigenvalues
$$
1\, ,\ 2\, ,\ \frac{2}{1\pm \xi^{-1}}\, ,\ \frac{2}{1\pm \xi}\, ,
$$
counted according to multiplicity.

Next let us consider the truly oscillatory region, and here we will also match with the known asymptotics.
If $\xi = i t$ for $t \in (0,\infty)$ then both $R^{(4)}(Z_+)$ and $R^{(4)}(Z_-)$ are strictly accretive.
Also, the weights $\mathcal{W}(Z_+)$ and $\mathcal{W}(Z_-)$ have equal magnitude, as is required to have $\mathcal{M}_2=\mathcal{M}_1$.
Then direct calculation shows Conjecture \ref{conj:1} states
$$
\frac{1}{(2n)!}\, \operatorname{perm}(A^{(2,n)})\,
\sim\, 2^{-2n} \left( \left(\frac{(-t^2+it)^2}{-t^2}\right)^n\, \sqrt{\frac{(-t^2+it)^2}{-4it^3}} + \left(\frac{(-t^2-it)^2}{-t^2}\right)^n\, \sqrt{\frac{(-t^2-it)^2}{4it^3}}\right)
$$
and this can be rewritten as 
$$
\frac{1}{(2n)!}\, \operatorname{perm}(A^{(2,n)})\,
\sim\, \left(\frac{1+t^2}{4}\right)^n\, \sqrt{\frac{1+t^2}{t}}\, \cos\left((2n+1)\tan^{-1}(t)+\frac{\pi}{4}\right)\, ,
$$
as $n \to \infty$. This exactly matches the asymptotics of the Legendre polynomial, by inspection of 
$$
\text{\url{https://en.wikipedia.org/wiki/Legendre_polynomials}}
$$

If, instead, we consider 
$$
B\, =\, \begin{bmatrix} 1 & 1 \\ 1 & \xi^2 \end{bmatrix}\, ,
$$
for $\operatorname{Re}(\xi)>0$, then 
$\mathcal{M}_2 = \{Z_+\}$.
The eigenvalues of $R^{(4)}(Z_+)$ are
$$
1\, ,\ 1\, ,\ \frac{2}{1+\xi^{-1}}\, ,\ \frac{2}{\xi+1}\, .
$$
These are all in $\{x+iy\, :\, x>0\, ,\ y \in \R\}$. So $B$ is of ``simple type.''
Then Conjecture \ref{conj:1} says
$$
\frac{1}{(2n)!}\, \operatorname{perm}\left(A^{(2,n)}\right)\, \sim\, \frac{1}{2}\, \left(\frac{\xi+1}{2}\right)^{2n}\, 
\frac{\xi+1}{\sqrt{\xi}}\, ,
$$
as $n \to \infty$.
Again, this matches the known asymptotics of the Legendre polynomial (and the central binomial coefficient).

This is essentially the unique two-dimensional example. For higher dimensional examples, the solutions of the Schr\"odinger bridge equation
are more complicated. We do know that the generic number of solutions is independent of the parameters due to Bezout's theorem, although
special singular cases may result in fewer solutions on lower dimensional subvarieties. An important complication that arises is that some
critical points that are not strictly accretive can also be relevant, if the weight of such a critical point is dominant.
This seems to require a more careful analysis such as in \cite{Pham} or possibly \cite{PemantleWilsonMelczer}.

\subsection{Additional ad hoc examples}
{\bf Example A.1: $m=3$.}
A $3\times 3$ example is 
$$
B\, =\, \begin{bmatrix} 1 & 1 & 1 \\ 
1 & 1 & -1 \\ 
1 & -1 & 1 \end{bmatrix}\, .
$$
In this case, let us use
$$
(n!)^{-3} \operatorname{perm}\left(A^{(3,n)}\right)\, =\, [x^n y^n z^n]\big((x+y+z)^n(x-y+z)^n(x+y-z)^n\big)\, .
$$
By a conjecture of Peter Bala (on OEIS), which was recently proved by Sela Fried \cite{Fried},
this is the same as 
$$
(n!)^{-3} \operatorname{perm}\left(A^{(3,n)}\right)\, =\, \sum_{k=0}^{n} \binom{n}{k}^3\, ,
$$
known as the Franel numbers.
Then by consulting
$$
\text{\url{https://oeis.org/A000172}}
$$
we can see that
$$
\sum_{k=0}^{n} \binom{n}{k}^3\, \sim\, \frac{2^{3n+1}}{\sqrt{3}\, \pi n}\, .
$$
The full set of critical points (whether stable or not) is 
$\mathcal{M}_1 = \{Z_1,Z_2,Z_3\}$ where
$$
Z_1\, =\, \begin{bmatrix} -1 & 1 & 1 \\ 1 & -1 & -1 \\ 1 & -1 & -1 \end{bmatrix}\, ,\qquad
Z_2\, =\, \frac{1}{2}\, \begin{bmatrix} 1 & \exp(\pi i/3) & \exp(5\pi i/3) \\
\exp(5\pi i/3) & 1 & \exp(4\pi i/3) \\
\exp(\pi i/3) & \exp(2\pi i/3) & 1 \end{bmatrix}\ \text{ and }\ 
Z_3\, =\, \overline{Z_2}\, .
$$
Then $\mathcal{W}(Z_1)=-1$, $\mathcal{W}(Z_2)=\mathcal{W}(Z_3)=2^3$.
So $Z_2$ and $Z_3$ have dominant weights: $\mathcal{M}_2 = \{Z_2,Z_3\}$.
The eigenvalues of the $R^{(6)}(Z_2)$ are the same as the eigenvalues of $R^{(6)}(Z_3)$:
$$
1\, ,\ 1\, ,\ 1+\frac{i}{\sqrt{2}}\, ,\ 1+\frac{i}{\sqrt{2}}\, ,\ 1-\frac{i}{\sqrt{2}}\, ,\ 1-\frac{i}{\sqrt{2}}\, ,
$$
counted according to multiplicity.
These are all in $\{x+iy\, :\, x>0\, ,\ y\in\R\}$.
So this $B$ is of ``simple type.''
Then Conjecture \ref{conj:1} gives
$$
\frac{1}{(3n)!}\, \operatorname{perm}\left(A^{(3,n)}\right)\, \sim\, \frac{4}{3}\, \left(\frac{2}{3}\right)^{3n}\, ,
$$
as $n \to \infty$.
This is correct, using the Bala-Fried formula.
So this $B$ satisfies Definition \ref{def:1}.

{\bf Example A.2: $m=4$.}
A $4\times 4$ example where our conjecture is empty is 
$$
B\, =\, \begin{bmatrix} 1 & 1 & 1 & 1\\ 
1 & -1 & 1 & 1 \\ 
1 & 1 & -1 & 1 \\
1 & 1 & 1 & -1
\end{bmatrix}\, .
$$
In this case
$$
(n!)^{-4} \operatorname{perm}\left(A^{(4,n)}\right)\, =\, [x^n y^n z^n w^n]\big((x+y+z+w)^n(x-y+z+w)^n(x+y-z+w)^n(x+y+z-w)^n\big)\, .
$$
Bala has also conjectured that this is equal to $\sum_{k=0}^{n} \left(\binom{n}{k} \binom{2k}{n}\right)^2$ on 
$$
\text{\url{https://oeis.org/A290575}}
$$
On that same page, according to asymptotic analysis of Vaclav Kotesovec then this would mean
$$
(n!)^{-4} \operatorname{perm}\left(A^{(4,n)}\right)\, \sim\, (2+2\, \sqrt{2})^{2n} 2^{-3/4}  (1+\sqrt{2}) (n\pi)^{-3/2}\, ,
$$
as $n \to \infty$.
So this suggests
$$
\frac{1}{(4n)!}\, 
\operatorname{perm}\left(A^{(4,n)}\right)\, \sim\, 2^{-1/4}\left(1+\sqrt{2}\right)\, \left(\frac{3+2\sqrt{2}}{64}\right)^n\, ,
$$
as $n \to \infty$.
But one cannot recover that from our conjecture because $\mathcal{M}_2$ is empty.
We do have $\mathcal{M}_1=\{Z_+,Z_-\}$ for
$$
Z_{\pm}\, =\, \begin{bmatrix} -2 & 1 & 1 & 1 \\ 
1 & 0 & 0 & 0 \\
1 & 0 & 0 & 0 \\
1 & 0 & 0 & 0 \end{bmatrix}
\pm \frac{1}{\sqrt{2}}
\begin{bmatrix}
3 & -1 & -1 & -1\\
-1 & 1 & 1 & 1\\
-1 & 1 & 1 & 1\\
-1 & 1 & 1 & 1
\end{bmatrix}\, .
$$
Obviously $Z_+$ has a dominant weight, and $Z_-$ has a subdominant weight.
But for neither of these matrices is the resulting $R^{(8)}(Z)$ matrix strictly accretive.
So $B$ is not of ``simple type.''
It is true that $\mathcal{W}(Z_+)^n/4^{4n}=\left(\frac{3+2\sqrt{2}}{64}\right)^n$.
But the eigenvalues of $R^{(8)}(Z_+)$ are
$$
1+\sqrt{2}\, ,\ 1+\sqrt{2}\, ,\ 2(2-\sqrt{2})\, ,\ 1\, ,\ 1\, ,\ 1-\sqrt{2}\, ,\ 1-\sqrt{2}\, ,\ 2(\sqrt{2}-1)
$$
counted according to multiplicity.
(So, for example, $1/\sqrt{\det(R^{(8)}(Z_+))}$ is  not $2^{-1/4} (1+\sqrt{2})$.)
Further analysis would need to be done for this type of example, presumably based on the reference \cite{Pham}
and/or \cite{PemantleWilsonMelczer}.

\section{Permanent of random matrices}
Aaronson has guessed more than the Aaronson-Arkhipov conjecture. He also guessed that the distribution of the logarithms of the absolute values
of the permanents are normally distributed.
One may expect that if one were to run a Ginibre process
$$
d(A(t))_{j,k}\, =\, dB^{(1)}_{j,k}(t) + i dB^{(2)}_{j,k}(t)\, ,
$$
for $B^{(1)}_{j,k}(t)$, $B^{(2)}_{j,k}(t)$ IID standard (real-valued) Brownian motions,
then $\ln(|\operatorname{perm}(A(t))|)$ would be well-approximated by a real Brownian motion.
In fact this seems to be the case numerically according to a recent investigation by Rivin \cite{Rivin}.
But he points out that this surmise needs to be completely revised if one replaces the complex Ginibre
ensemble by the real Ginibre ensemble. Deriving an exact stochastic differential equation for the permanent of a finite $N \times N$ matrix is difficult due to the lack of unitary invariance. Instead, our approach here is to bypass the microscopic combinatorial complexity by taking the macroscopic block-uniform limit ($n \to \infty$) first. We analyze the continuous-time evolution of the optimal transport plan (the Schrödinger bridge) under these stochastic matrix perturbations. By utilizing linear response theory and Itô calculus, we can track the stochastic drift of the large deviation rate function $\Lambda$, providing a rigorous macroscopic justification for the varying martingale behaviors under real versus complex noise.
\subsection{Linear response theory}

Consider a differentiable 1-parameter family of matrices $B(t)$, and let us consider how
the solution of the Schr\"odinger bridge equation evolves.
Writing $\Delta B(t) = B(t+\Delta t) - B(t)$ and similarly for $\Delta \boldsymbol{x}(t)$
and $\Delta \boldsymbol{y}(t)$,
we have
\begin{equation}
\label{eq:diff1}
\forall r \in [m]\, ,\ 
\left(x_r + \Delta x_r\right) \sum_{s=1}^m \left(b_{r,s} + \Delta b_{r,s}\right)\left(y_s + \Delta y_s\right)\, =\, 1\, ,
\end{equation}
where we suppress the $t$ dependence,
and
\begin{equation}
\label{eq:diff2}
\forall s \in [m]\, ,\ 
\sum_{r=1}^{m} \left(x_r + \Delta x_r\right) \left(b_{r,s} + \Delta b_{r,s}\right)\left(y_s + \Delta y_s\right)\, =\, 1\, .
\end{equation}
If we keep terms only linear in $\Delta$'s, then we have
$$
\begin{bmatrix} I & T\\ T^T & I \end{bmatrix}
\begin{bmatrix} X^{-1} & 0 \\ 0 & Y^{-1} \end{bmatrix} \begin{bmatrix} \Delta \boldsymbol{x} \\ \Delta \boldsymbol{y}
\end{bmatrix}\,
=\, -\begin{bmatrix} m^{1/2} X (\Delta B) Y \boldsymbol{\chi} \\ 
m^{1/2} \big(X(\Delta B)Y\big)^T \boldsymbol{\chi} \end{bmatrix}\, ,
$$
where $T = XBY$ and we use that $T \boldsymbol{\chi} = \boldsymbol{\chi}$ and $T^T \boldsymbol{\chi} = \boldsymbol{\chi}$.
As usual there is a natural zero-mode for  the left-most  matrix on the left-hand-side,  $[\boldsymbol{\chi}^T,-\boldsymbol{\chi}^T]^T$,
but the right-hand-side is orthogonal to this vector. Let
$$
\left(\begin{bmatrix} I & T\\ T^T & I \end{bmatrix}^{\perp}\right)^{-1}
$$
indicate the inverse of the matrix projected to the orthogonal complement of the range of
$$
\begin{bmatrix} J/2 & -J/2 \\ -J/2 & J/2 \end{bmatrix}\, .
$$
So in linear response theory, we have
$$
\begin{bmatrix} \Delta \boldsymbol{x} \\ \Delta \boldsymbol{y}\end{bmatrix}\,
=\, - \begin{bmatrix} X & 0 \\ 0 & Y \end{bmatrix}
\left(\begin{bmatrix} I & T\\ T^T & I \end{bmatrix}^{\perp}\right)^{-1}
\begin{bmatrix} m^{1/2} X (\Delta B) Y \boldsymbol{\chi} \\ 
m^{1/2} \big(X(\Delta B)Y\big)^T \boldsymbol{\chi} \end{bmatrix}\, .
$$

\begin{remark}
If $B$ were strictly positive then we would know that the largest singular value of $T$ would be $\sigma_1=1$,
and it would be nondegenerate. (Its multiplicity would be 1.)
But if we are in the context of real or even complex matrices, then we will have a break-down of linear response theory
should a second singular value of $T$ reach magnitude 1.
\end{remark}

\subsection{It\^o  correction: drift}

We now compute the stochastic drift of the large deviation rate function $\Lambda$ under continuous Ginibre noise. This provides an analytic justification for the real-versus-complex dichotomy observed numerically by Rivin \cite{Rivin}, showing that real noise induces a negative drift while complex noise yields a martingale.

Define the function
\begin{equation}
f_B(\boldsymbol{x},\boldsymbol{y})\, =\, \sum_{r=1}^{m} \sum_{s=1}^{m} x_r b_{r,s} y_s - \sum_{r=1}^{m} \ln(x_r) - \sum_{s=1}^{m} \ln(y_s)\, .
\end{equation}
Recall from our steepest descents analysis that the optimal scaling vectors $\boldsymbol{x}^*$ and $\boldsymbol{y}^*$ are the critical points of $f_B$. At this optimum, the transport matrix $T$ is doubly stochastic, so the sum of its entries is $m$. Evaluating the function at the critical point yields
\begin{equation}
f_B(\boldsymbol{x}^*,\boldsymbol{y}^*)\, =\, m - \sum_{r=1}^{m} \ln(x_r^*) - \sum_{s=1}^{m} \ln(y_s^*)\, .
\end{equation}
Consequently, the rate function $\Lambda$ evaluated at $B$ is given by
\begin{equation}
\Lambda(B)\, =\, f_B(\boldsymbol{x}^*(B),\boldsymbol{y}^*(B)) - m\, .
\end{equation}

Consider $B(0) = J^{(m)}$, and let $dB(t)$ be an $m\times m$ matrix whose entries are independent differentials of standard Brownian motion. By the multivariate chain rule, the first variation of $\Lambda$ with respect to a matrix entry $b_{r,s}$ is
\begin{equation}
\frac{\partial \Lambda}{\partial b_{r,s}}\, =\, \frac{\partial f_B}{\partial b_{r,s}} + \sum_{k=1}^{m} \frac{\partial f_B}{\partial x_k} \frac{\partial x_k}{\partial b_{r,s}} + \sum_{k=1}^{m} \frac{\partial f_B}{\partial y_k} \frac{\partial y_k}{\partial b_{r,s}}\, .
\end{equation}
Since $(\boldsymbol{x}^*, \boldsymbol{y}^*)$ is a critical point of $f_B$, the partial derivatives with respect to $x_k$ and $y_k$ vanish. By the envelope theorem, the derivative reduces to the direct partial derivative:
\begin{equation}
\label{eq:envelope1}
\frac{\partial \Lambda}{\partial b_{r,s}}\, =\, x_r y_s\, .
\end{equation}

Applying It\^o's formula to $\Lambda(B(t))$ gives
\begin{equation}
d\Lambda\, =\, \sum_{r=1}^{m} \sum_{s=1}^{m} \left(\frac{\partial \Lambda}{\partial b_{r,s}}\right) db_{r,s} + \frac{1}{2} \sum_{r=1}^{m} \sum_{s=1}^{m} \left(\frac{\partial^2 \Lambda}{\partial b_{r,s}^2}\right) (db_{r,s})^2\, .
\end{equation}
Substituting (\ref{eq:envelope1}) into the expansion and using the quadratic variation $(db_{r,s})^2 = dt$ for the real Ginibre ensemble, we obtain
\begin{equation}
d\Lambda\, =\, \sum_{r=1}^{m} \sum_{s=1}^{m} x_r y_s db_{r,s} + \frac{1}{2} \sum_{r=1}^{m} \sum_{s=1}^{m} \frac{\partial (x_r y_s)}{\partial b_{r,s}}\, dt\, .
\end{equation}

By the product rule, $\partial(x_r y_s) / \partial b_{r,s} = y_s (\partial x_r / \partial b_{r,s}) + x_r (\partial y_s / \partial b_{r,s})$. The linear responses $\partial \boldsymbol{x} / \partial B$ and $\partial \boldsymbol{y} / \partial B$ were computed in the preceding section in terms of the pseudo-inverse $\mathcal{G} = \left(\begin{bmatrix} I & T\\ T^T & I \end{bmatrix}^{\perp}\right)^{-1}$. Substituting these terms yields the drift:
\begin{equation}
d\Lambda_{\text{drift}}\, =\, - \frac{1}{2}\, \sum_{r=1}^{m} \sum_{s=1}^{m} x_{r}^2 y_{s}^2
\begin{bmatrix} \boldsymbol{e}_{r} \\ \boldsymbol{e}_{s} \end{bmatrix}^T \mathcal{G}
\begin{bmatrix} \boldsymbol{e}_{r} \\ \boldsymbol{e}_{s} \end{bmatrix}\, dt\, .
\end{equation}
Since $\mathcal{G}$ is positive-definite on the subspace orthogonal to the zero-mode, this quadratic form is positive, yielding a strictly negative drift for $\Lambda$ under real Ginibre noise.

This formulation also clarifies the complex Ginibre ensemble case. If the noise is complex, the differential is $dB = dR + i dM$ for independent real Brownian motions $dR$ and $dM$. The quadratic variation is then
\begin{equation}
(db_{r,s})^2\, =\, (dr_{r,s} + i dm_{r,s})^2\, =\, (dr_{r,s})^2 - (dm_{r,s})^2\, =\, dt - dt\, =\, 0\, .
\end{equation}
The real and imaginary quadratic variations cancel out. Thus, the It\^o drift vanishes, and $\Lambda$ evolves as a local martingale with no macroscopic drift.

\section*{Acknowledgments}

We are grateful to Sumit Mukherjee, Soumik Pal and Raghavendra Tripathi for useful conversations.

\baselineskip=12pt
\bibliographystyle{plain}

\begin{thebibliography}{10}


\bibitem{AaronsonArkhipov}
Scott Aaronson and Alex Arkhipov.
\newblock The computational complexity of linear optics.
\newblock {\em STOC'11: Proceedings of the forty-third annual ACM symposium
on Theory of computing}, pp.~333-342 (2011).

%
%
%
%
%

\bibitem{Abdesselam}
Abdelmalek Abdesselam.
\newblock A central limit theorem for connected components of random coverings of manifolds with
nilpotent fundamental groups.
\newblock {\em Preprint} (2026)
\newblock {\url{https://arxiv.org/abs/2603.24499}}



%



%
%
%


%
%


\bibitem{Cuturi2013}
Marco Cuturi.
\newblock Sinkhorn Distances: Lightspeed Computation of Optimal Transportation Distances.
\newblock {\em NIPS'13: Proceedings of the 27th International Conference on Neural Information Processing Systems - Volume 2}
\newblock pp.~2292--2300 (2013).



%


%


%
\bibitem{FlajoletSedgewick}
Phillipe Flajolet and Robert Sedgewick.
\newblock {\em Analytic Combinatorics.}
Cambridge University Press, Cambridge, UK 2009.


\bibitem{Fried}
Sela Fried.
\newblock Proof of Ten Conjectures from the OEIS.
\newblock {\em J.~Integer.~Seq.} {\bf 29} 26.1.8 (2026).

\bibitem{Galichon}
Alfred Galichon.
\newblock {\em Optimal Transport Methods in Economics.}
\newblock Princeton University Press, Princeton, NJ, 2016.

%

%

\bibitem{HLP}
Zaid~Harchaoui, Lang Liu and Soumik Pal.
\newblock Asymptotics of discrete Schr\"odinger bridges via chaos decomposition.
\newblock {\em Bernoulli} {\bf 30}, no.~3, 1945--1970 (2024).

%


%
\bibitem{HossainStarr}
Samen Hossain and Shannon Starr.
\newblock About the Moments of the Generalized Ulam Problem.
\newblock Preprint 2024. 
\newblock \url{https://arxiv.org/abs/2404.05860}

\bibitem{Huh}
Joonsuk Huh.
\newblock Quantum estimation bound of the Gaussian matrix permanent.
\newblock {\em Phys.~Rev.~A} {\bf 111},  012418 (2025).


%

\bibitem{ItzyksonDrouffe}
Claude Itzykson and Jean-Michel Drouffe.
\newblock {\em Statistical Field Theory. Volume 1, From Brownian motion to renormalization and lattice gauge theory.}
\newblock Cambridge University Press, Cambridge, UK, 1989.



%
%

\bibitem{Leonard}
Christian L\'eonard.
\newblock From the Schr\"odinger problem to the Monge-Kantorovich problem.
\newblock {\em J.~Funct.~Anal.} {\bf 262}, no.~4, 1879--1920 (2012).

%

\bibitem{McCullagh}
Peter McCullagh.
\newblock An asymptotic approximation for the permanent of
a doubly stochastic matrix.
\newblock {\em J.~Statist.~Computation and Simulation} {\bf 84}, no.~2 (2014).

%

\bibitem{Mukherjee}
Sumit Mukherjee.
\newblock Estimation in exponential families on permutations.
\newblock {\em Ann.~Statistics} {\bf 44}, no.~2, 853--875 (2016).

\bibitem{Muirhead}
Robb J.~Muirhead.
\newblock {\em Aspects of Multivariate Statistical Theory.}
\newblock John Wiley and Sons, Hoboken, NJ, 1982.


%
%

\bibitem{Pal}
Soumik Pal.
\newblock Limiting partition function for the Mallows model: a conjecture
and partial evidence.
\newblock {\em Preprint},
\url{https://arxiv.org/abs/2406.18855}. (2024)


\bibitem{PemantleWilsonMelczer}
Robin Pemantle, Mark C.~Wilson and Stephen Melczer.
\newblock {\em Analytic Combinatorics in Several  Variables. 2nd Ed.}
\newblock Cambridge University Press, Cambridge, UK 2024. 

\bibitem{Pham}
Fr\'ed\'eric Pham.
{\em Introduction \`a l'\'etude topologique des singularit\'es de Landau.}
\newblock Paris: Gauthier-Villars (1967).

\bibitem{Pinsky1}
Ross G.~Pinsky.
\newblock Law of large numbers for increasing
subsequences of random permutations.
\newblock {\em Random Structures Algorithms} {\bf 29}, no.~3, 277--295 (2006). 

\bibitem{Pinsky2}
Ross G.~Pinsky.
\newblock When the law of large numbers fails for increasing subsequences of random permutations.
\newblock {\em Ann.~Probab.} {\bf 35}, no.~2, 758--772 (2007).

%


%
%

\bibitem{Rivin}
Igor Rivin.
\newblock Permanents of matrix ensembles: computations, distribution and geometry.
\newblock {\em Preprint.}
\newblock \url{https://arxiv.org/abs/2602.10141}
(2026)


%
%
\bibitem{Simon}
Barry Simon.
\newblock {\em The Statistical Mechanics of Lattice Gases, Volume I.}
\newblock Princeton University Press, Princeton, NJ, 1993.


\bibitem{Sinkhorn1964}
Richard Sinkhorn.
\newblock A Relationship Between Arbitrary Positive Matrices and Doubly Stochastic Matrices.
\newblock {\em Ann.~Math.~Statist.} {\bf 35}, no.~2, 876--879 (1964).

%

%
%
%


%

\bibitem{Tripathi}
Raghavendra Tripathi.
\newblock On the partition function of a class of Mallows model.
\newblock {\em Preprint}
\url{https://arxiv.org/abs/2605.03647v1} (2026)

\bibitem{Valiant}
L.~G.~Valiant.
\newblock The complexity of computing the permanent.
\newblock {\em Theor.~Comput.~Science} {\bf 8}, no.~2,
189--201 (1979).

%


\end{thebibliography}

\end{document}